\def\phi{\varphi}
\def\R{{\mathbb{R}}}
\def\Z{{\mathbb{Z}}}
\def\Bar{\overline}
\def\wt{\widetilde}
\def\id{\mathord{\rm id}}
\def\allowhyphenation{\penalty1000\hskip0pt}
\def\dash{\discretionary{-}{}{-}\allowhyphenation}
\def\lput(#1,#2)#3{\put(#1,#2){\hbox to 0pt{\hss{#3}}}}
\def\cput(#1,#2)#3{\put(#1,#2){\hbox to 0pt{\hss{#3}\hss}}}
\newtheoremstyle{mein}{\medskipamount}{\medskipamount}{\it}{0pt}{\sc}{.}{1ex}{}
\theoremstyle{mein}
\newtheorem{prop}{Proposition}
\newtheorem{lem}[prop]{Lemma}
\newtheorem{thm}[prop]{Theorem}
\newtheorem{cor}[prop]{Corollary}
\newtheorem{defn}[prop]{Definition}
\newenvironment{proof}{\begin{list}{}
    {\leftmargin0pt\labelwidth0pt\itemindent\labelsep}\item[{\sc Proof.}]}
{\qed\end{list}}
\newtheoremstyle{rem}{\medskipamount}{\medskipamount}{\rm}{0pt}{\it}{.}{1ex}{}
\theoremstyle{rem}
\newtheorem{remark}{Remark}
\newtheorem{example}{Example}
\newenvironment{mlist}{\setcounter{enumi}{0}\begin{list}
    {\stepcounter{enumi}\arabic{enumi}.\hfill}
    {\labelsep0.2ex\leftmargin0ex
    \itemsep0pt\parsep0pt\labelwidth2.5ex\itemindent6ex}}
{\end{list}\smallskip}
\begin{document}
\title [A curvature theory for discrete surfaces]
    {A curvature theory for discrete surfaces
	\\ based on mesh parallelity}
\def\Email#1{email {\spaceskip0pt plus 0.5pt\xspaceskip\spaceskip #1}}
\author[A.\ I.\ Bobenko]{Alexander I.\ Bobenko}
\address{Alexander Bobenko. Institut f\"ur Mathematik, TU Berlin, Strasse
des 17.\ Juni 136, D~10623 Berlin. \Email{bobenko @math.
tu-berlin. de}.
}
\author[H.\ Pottmann]{Helmut Pottmann}
\address{Helmut Pottmann.
Geometric Modeling and Industrial Geometry, TU Wien,
Wiedner Hauptstr.\ 8--10/104,
A~1040 Wien.
\Email{pottmann @geometrie. tuwien. ac. at}.
}

\author[J. Wallner]{Johannes Wallner}
\address{Johannes Wallner. Institut f\"ur Geometrie, TU Graz,
Kopernikusgasse 24, A~8010 Graz. \Email{j. wallner @tugraz. at}.
}

 \begin{abstract} We consider a general theory of curvatures of discrete 
surfaces equipped with edgewise parallel Gauss images, and where mean and 
Gaussian curvatures of faces are derived from the faces' areas and mixed 
areas. Remarkably these notions are capable of unifying notable previously
defined classes of surfaces, such as discrete isothermic minimal surfaces and
surfaces of constant mean curvature. We discuss various 
types of natural Gauss images, the existence of principal curvatures, 
constant curvature surfaces, Christoffel duality, Koenigs nets, contact 
element nets, s-isothermic nets, and interesting special cases such as 
discrete Delaunay surfaces derived from elliptic billiards.
 \end{abstract}

\maketitle

\section{Introduction}

A new field of {\em discrete differential geometry} is presently emerging 
on the border between differential and discrete geometry; see, for 
instance, the recent books \cite{Bobenko_Schroeder_Sullivan_Ziegler_2008, 
bobenko-2008-ddg}. Whereas classical differential geometry investigates 
smooth geometric shapes (such as surfaces), and discrete geometry studies 
geometric shapes with a finite number of elements (such as polyhedra), 
discrete differential geometry aims at the development of discrete 
equivalents of notions and methods of smooth surface theory. The latter 
appears as a limit of refinement of the discretization. Current progress 
in this field is to a large extent stimulated by its relevance for 
applications in computer graphics, visualization and architectural design.

Curvature is a central notion of classical differential geometry, and 
various discrete analogues of curvatures of surfaces have been studied. A 
well known discrete analogue of the Gaussian curvature for general 
polyhedral surfaces is the angle defect at a vertex. One of the most 
natural discretizations of the mean curvature of simplicial surfaces 
(triangular meshes) introduced in \cite{Pinkall_Polthier_1993} is based on 
a discretization of the Laplace\dash Beltrami operator (cotangent 
formula).

Discrete surfaces with quadrilateral faces can be treated as discrete 
parametrized surfaces. There is a part of classical differential geometry 
dealing with parametrized surfaces, which goes back to Darboux, Bianchi, 
Eisenhart and others. Nowadays one associates this part of differential 
geometry with the theory of integrable systems; see \cite{Fordy_Wood_1994, 
Rogers_Schief_2002}. Recent progress in discrete differential geometry has 
led not only to the discretization of a large body of classical results, 
but also, somewhat unexpectedly, to a better understanding of some 
fundamental structures at the very basis of the classical differential 
geometry and of the theory of integrable systems; see 
\cite{bobenko-2008-ddg}.

This point of view allows one to introduce natural classes of surfaces 
with constant curvatures by discretizing some of their characteristic 
properties, closely related to their descriptions as integrable systems. 
In particular, the discrete surfaces with constant negative Gaussian 
curvature of \cite{Sauer_1950} and \cite{Wunderlich_1951} are discrete 
Chebyshev nets with planar vertex stars. The discrete minimal surfaces of 
\cite{bobenko-1996} are circular nets Christoffel dual to discrete 
isothermic nets in a two\dash sphere. The discrete constant mean curvature 
surfaces of \cite{BP99} and \cite{Hertrich-Jeromin_Hoffmann_Pinkall_1999} 
are isothermic circular nets with their Christoffel dual at constant 
distance. The discrete minimal surfaces of Koebe type in 
\cite{bobenko-2006-ms} are Christoffel duals of their Gauss images which 
are Koebe polyhedra. Although the classical theory of the corresponding 
smooth surfaces is based on the notion of a curvature, its discrete 
counterpart was missing until recently.

One can introduce curvatures of surfaces through the classical Steiner 
formula. Let us consider an infinitesimal neighborhood of a surface $m$ 
with the Gauss map $s$ (contained in the unit sphere $S^2$). For 
sufficiently small $t$ the formula
	$$
	m^t=m+ts
	$$
 defines smooth surfaces parallel to $m$. The infinitesimal area of the 
parallel surface $m^t$ turns out to be a quadratic polynomial of $t$ and 
is described by the Steiner formula
	\begin{equation}\label{eq:Steiner}
	dA(m^t)=(1-2Ht+Kt^2)\,dA(m),
	\end{equation}
 Here $dA$ is the infinitesimal area of the corresponding surface and $H$ 
and $K$ are the mean and the Gaussian curvatures of the surface $m$, 
respectively. In the framework of {\em relative} differential geometry 
this definition was generalized to the case of the Gauss map $s$ contained 
in a general convex surface.

A discrete version of this construction is of central importance for this 
paper. It relies on an edgewise parallel pair $m, s$ of polyhedral 
surfaces. It was first applied in \cite{Schief_2003, schief-2006} to 
introduce curvatures of circular surfaces with respect to arbitrary Gauss 
maps $s\in S^2$. We view $s$ as the Gauss image of $m$ and do not require 
it to lie in $S^2$, i.e., our generalization is in the spirit of relative 
differential geometry \cite{simon:1992}. Given such a pair, one has a 
one\dash parameter family $m^t=m+ts$ of polyhedral surfaces with parallel 
edges, where linear combinations are understood vertex\dash wise.

We have found an unexpected connection of the curvature theory to the 
theory of mixed volumes \cite{schneider-1993}. Curvatures of a pair 
$(m,s)$ derived from the Steiner formula are given in terms of the areas 
$A(m)$ and $A(s)$ of the faces of $m$ and $s$, and of their mixed area 
$A(m,s)$:
	\begin{eqnarray*}
	A(m^t)=(1-2 H t+K t^2)A(m),\quad
	H=-\frac{A(m,s)}{A(m)},\quad K=\frac{A(s)}{A(m)}.
	\end{eqnarray*}
 The mixed area can be treated as a scalar product in the space of 
polygons with parallel edges. The orthogonality condition with respect to 
this scalar product $A(m,s)=0$ naturally recovers the Christoffel 
dualities of \cite{bobenko-1996} and \cite{bobenko-2006-ms}, and discrete 
Koenigs nets (see \cite{bobenko-2008-ddg}). It is remarkable that the 
aforementioned definitions of various classes of discrete surfaces with 
constant curvatures follow as special instances of a more general concept 
of the curvature discussed in this paper.

It is worth to mention that the curvature theory presented in this paper 
originated in the context of multilayer constructions in architecture 
\cite{pottmann-2007-pm}.

 \section{Discrete surfaces and their Gauss images}
\label{sec:2}

This section sets up the basic definitions and our notation. It is 
convenient to use notation which keeps the abstract combinatorics of 
discrete surfaces separate from the actual locations of vertices. We 
consider a 2\dash dimensional cell complex $(V,E,F)$ which we refer to as 
{\em mesh combinatorics}. Any mapping $m:i\in V\mapsto m_i \in \R^3$ of 
the vertices to Euclidean space is called a {\em mesh}. If all vertices 
belonging to a face are mapped to co\dash planar points, we would like to 
call the mesh a {\em polyhedral surface}. If $f=(i_1,\dots, i_n)$ is a 
face with vertices $i_1,\dots, i_n$, we use the symbol $m(f)$ to denote 
the $n$-gon $m_{i_1},\dots, m_{i_n}$.

 \begin{defn} Meshes $m, m'$ having combinatorics $(V,E,F)$ are parallel, 
if for each edge $(i, j)\in E$, vectors $m_i-m_j$ and $m'_i-m'_j$ are 
linearly dependent.
 \end{defn}

Obviously for any given combinatorics there is a vector space $(\R^3)^V$ 
of meshes, and for each mesh there is a vector space of meshes parallel to 
$m$. If no zero edges $(i, j)$ with $m_i=m_j$ are present, parallelity is 
an equivalence relation. In case $m$ is a polyhedral surface without zero 
edges and $m'$ is parallel to $m$, then also $m'$ is a polyhedral surface, 
such that corresponding faces of $m$ and $m'$ lie in parallel planes.

A pair of parallel meshes $m, m'$ where corresponding vertices $m_i, m_i'$ 
do not coincide defines a system of lines $L_i=m_i\vee m_i'$. By 
parallelity, lines associated with adjacent vertices are co\dash planar, 
so the lines $L_i$ constitute a line congruence \cite{bobenko-2008-ddg}. 
It is easy to see that for simply connected combinatorics we can uniquely 
construct $m'$ from this congruence and a single seed vertex $m_{i_0}'\in 
L_{i_0}$, provided no faces degenerate and the lines $L_i$ intersect 
adjacent faces transversely.

A special case of this construction is a parallel pair $m, m'$ of 
polyhedral surfaces which are {\em offsets at constant distance $d$} of 
each other, in which case the lines $L_i$ are considered as {\em surface 
normals}. The vectors
	$$s_i = {1\over d}(m_i'-m_i)$$
 define the mesh $s$ called the {\em Gauss image} of $m$. Following 
\cite{pottmann-2007-pm, pottmann-2008-fg}, we list the three main 
definitions, or rather clarifications, of the otherwise rather vague 
notion of {\em offset}:

\begin{mlist}

\item[$*$\ ] {\it Vertex offsets:} the parallel mesh pair $m$, $m'$ is a 
vertex offset pair, if for each vertex $i\in V$, $\|m_i-m_i'\|=d$. The 
Gauss image $s$ is inscribed in the unit sphere $S^2$.

\item[$*$\ ] {\it Edge offsets:} $(m, m')$ is an edge offset pair, if 
corresponding edges $m_im_j$ and $m_i' m_j'$ are contained in parallel 
lines of distance $d$. The Gauss image $s$ in midscribed to the unit 
sphere (i.e., edges of $s$ are tangent to $S^2$ and $s$ is a {\em Koebe 
polyhedron}, see \cite{bobenko-2006-ms}).

\item [$*$\ ] {\it Face offsets:} $(m, m')$ is an face offset pair, if for 
each face $f\in F$, the $n$\dash gons $m(f)$, $m'(f)$ lie in parallel 
planes of distance $d$. The Gauss image $s$ is circumscribed to $S^2$. 
\end{mlist}

The polyhedral surfaces which possess face offsets are the {\em conical 
meshes}, where for each vertex the adjacent faces are tangent to a right 
circular cone. The polyhedral surfaces with quadrilateral faces which 
possess vertex offsets are the {\em circular surfaces}, i.e. their faces 
are inscribed in circles.

\begin{remark} Meshes which possess face offsets or edge offsets can be 
seen as entities of Laguerre geometry \cite{pottmann-2008-eolag}, while 
meshes with regular grid combinatorics which have vertex offsets or face 
offsets are entities of Lie sphere geometry \cite{bobenko-2007-org, 
bobenko-2008-ddg}.
 \end{remark}

\section{Areas and mixed areas of polygons}

As a preparation for the investigation of curvatures we study the area of 
$n$\dash gons in $\R^2$. We view the area as a quadratic form and consider 
the associated symmetric bilinear form. The latter is closely related to 
the well known {\em mixed area} of convex geometry.

\subsection{Mixed area of polygons.} \label{sec:area}

The oriented area of an $n$\dash gon $P=(p_0,\dots, p_{n-1})$ contained in 
a two\dash dimensional vector space $U$ is given by Leibniz' sector 
formula:
    \begin{equation}
    \label{eq:area}
    A(P)={1\over 2}\sum\nolimits_{0\le i<n}\det (p_i, p_{i+1}).
    \end{equation}
 Here and in the following indices in such sums are taken modulo $n$. The 
symbol {\it det} means a determinant form in $U$. Apparently $A(P)$ is a 
quadratic form in the vector space $U^n$, whose associated symmetric 
bilinear form is also denoted by the symbol $A(P,Q)$:
    \begin{align}
    \label{eq:lincomb}
    & A(\lambda P+\mu Q) =\lambda^2 A(P)+2 \lambda\mu A(P,Q)
        + \mu^2 A(Q).
    \end{align}
 Note that in Equation \eqref{eq:lincomb} the sum of polygons is defined 
vertex\dash wise, and that $A(P,Q)$ does not, in general, equal the well 
known mixed area functional. For a special class of polygons important in 
this paper, however, we have that equality.

 \begin{defn} We call two $n$\dash gons $P,Q\in U^n$ parallel if their
 corresponding edges are parallel.
 \end{defn}

 \begin{lem} \label{lem:convexmixed} If parallel $n$\dash gons $P,Q$ 
represent the positively oriented boundary cycles of convex polygons 
$K,L$, then \eqref{eq:lincomb} computes the mixed area of $K,L$. \end{lem}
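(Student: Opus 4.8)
The plan is to realise the vertex\dash wise combination $\lambda P+\mu Q$ as the positively oriented boundary cycle of a Minkowski combination of $K$ and $L$, and then simply to read off the coefficient of $\lambda\mu$ on both sides.

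Recall that for planar convex bodies the mixed area $A(K,L)$ is defined by the requirement that $\mathrm{area}(\lambda K+\mu L)=\lambda^2\,\mathrm{area}(K)+2\lambda\mu\,A(K,L)+\mu^2\,\mathrm{area}(L)$ for all $\lambda,\mu\ge 0$, where $\lambda K+\mu L=\{\lambda x+\mu y:x\in K,\ y\in L\}$. The geometric core of the proof is the claim that, for $\lambda,\mu\ge 0$, this Minkowski combination is again a convex polygon whose positively oriented boundary cycle is exactly $\lambda P+\mu Q$. Since $P$ and $Q$ are parallel and positively oriented, they run through their edge directions in the same cyclic order, so $K$ and $L$ share one cyclically ordered list of outer edge normals $\nu_0,\dots,\nu_{n-1}$. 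Using that the support function of a Minkowski sum is the sum of the support functions, $h_{\lambda K+\mu L}=\lambda h_K+\mu h_L$, and that $\nu_{i-1},\nu_i$ are linearly independent, one checks that the vertex of $\lambda K+\mu L$ incident with the two edges of outer normals $\nu_{i-1}$ and $\nu_i$ is $\lambda p_i+\mu q_i$, because $p_i$ and $q_i$ are the corresponding vertices of $K$ and $L$. Running $i$ through $0,\dots,n-1$ identifies the boundary cycle of $\lambda K+\mu L$ with $\lambda P+\mu Q$.

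Granting this, the rest is bookkeeping. By Leibniz' sector formula \eqref{eq:area} the quadratic form $A$ evaluated on the positively oriented boundary cycle of a convex polygon returns its ordinary area, so $A(P)=\mathrm{area}(K)$, $A(Q)=\mathrm{area}(L)$, and $A(\lambda P+\mu Q)=\mathrm{area}(\lambda K+\mu L)$ whenever $\lambda,\mu\ge 0$. Expanding the left side by \eqref{eq:lincomb} and the right side by the defining relation for the mixed area, we obtain two polynomials in $\lambda,\mu$ that agree on the positive quadrant, hence identically; comparing the coefficients of $\lambda\mu$ yields $A(P,Q)=A(K,L)$.

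The main obstacle is really the degenerate cases of the geometric claim: the definition of parallelity allows a vertex of one polygon to face a whole edge of the other, so the common list of edge normals, and with it the indexing of vertices, must be set up with some care (edges of length zero contributing repeated vertices). One clean way to avoid this is to first perturb $P$ and $Q$ inside the vector space of polygons parallel to them so that all edges are nonzero and all $n$ edge directions distinct — this preserves convexity for small perturbations because it preserves the cyclic order of the normals — prove the identity in that generic situation, and then pass to the limit, since both sides of \eqref{eq:lincomb} depend polynomially, and in particular continuously, on the vertices of $P$ and $Q$.
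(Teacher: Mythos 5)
Your proof is correct and takes essentially the same route as the paper's: for $\lambda,\mu\ge 0$ the vertex-wise combination $\lambda P+\mu Q$ is identified with the positively oriented boundary cycle of the Minkowski combination $\lambda K+\mu L$, and the coefficient of $\lambda\mu$ is then read off on both sides. The paper asserts that boundary identification in a single line; your support-function argument and the perturbation treatment of zero-length edges merely supply details the paper leaves implicit.
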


 \begin{proof} For $\lambda,\mu\ge 0$, the polygon $\lambda P+\mu Q$ is 
the boundary of the domain $\lambda K+\mu L$, and so \eqref{eq:lincomb} 
immediately shows the identity of $A(P,Q)$ with the mixed area of $K,L$.
 \end{proof}

In view of Lemma \ref{lem:convexmixed}, we use the name {\em mixed area} 
for the symbol ``$A(P,Q)$'' in case polygons $P,Q$ are parallel. Next, we 
consider the {\em concatenation} of polygons $P_1,P_2$ which share a 
common sequence of boundary edges with opposite orientations which cancel 
upon concatenation. Successive concatenation of polygons $P_1,\dots,P_k$ 
is denoted by $P_1\oplus\ldots\oplus P_k$. It is obvious that 
$A(\bigoplus_i P_i)=\sum A(P_i)$, but also the oriented mixed areas of 
concatenations have a nice additivity property:

 \begin{lem} \label{lem:concat}
 Assume that $P_1\oplus\cdots\oplus P_k$ and $P_1'\oplus\cdots\oplus P_k'$ 
are two combinatorially equivalent concatenations of polygons, and that 
for $i=1,\dots, k$, polygons $P_i,P_i'$ are parallel. Then
    \begin{equation}
    A(P_1\oplus\cdots\oplus P_k,
	P_1'\oplus\cdots\oplus P_k')
     = A(P_1,P_1')+\cdots+A(P_k,P_k').
    \end{equation}
 \end{lem}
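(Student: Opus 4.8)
The plan is to reduce everything to bilinearity of the area form together with the additivity of oriented area under concatenation, which was already noted before the statement. Recall that $A(P,Q)$ is by definition the symmetric bilinear form polarizing the quadratic form $A$, so it suffices to prove the corresponding identity for the quadratic form $A$ and then polarize. Concretely, the bilinear form is recovered via $A(P,Q)=\tfrac12\bigl(A(P+Q)-A(P)-A(Q)\bigr)$, so I want to apply the single-polygon additivity $A(\bigoplus_i R_i)=\sum_i A(R_i)$ three times.

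\textbf{Key steps.} First I would observe that, because the $P_i$ and $P_i'$ are parallel for each $i$ and the two concatenations are combinatorially equivalent, the vertex-wise sum $P_i+P_i'$ is defined for each $i$, and the concatenation $(P_1+P_1')\oplus\cdots\oplus(P_k+P_k')$ makes sense and equals $\bigl(\bigoplus_i P_i\bigr)+\bigl(\bigoplus_i P_i'\bigr)$: the shared boundary edges that cancel in the $P$-concatenation are parallel to those that cancel in the $P'$-concatenation, so their vertex-wise sums again cancel, and no new identifications are introduced. Second, I apply $A(\bigoplus_i R_i)=\sum_i A(R_i)$ to the three families $R_i=P_i$, $R_i=P_i'$, and $R_i=P_i+P_i'$, obtaining
    \begin{align*}
    A\Bigl(\bigoplus_i(P_i+P_i')\Bigr)
      &= \sum_i A(P_i+P_i')
       = \sum_i\bigl(A(P_i)+2A(P_i,P_i')+A(P_i')\bigr).
    \end{align*}
Third, I expand the left-hand side using \eqref{eq:lincomb} applied to the two concatenated polygons $\bigoplus_i P_i$ and $\bigoplus_i P_i'$, getting $A(\bigoplus_i P_i)+2A(\bigoplus_i P_i,\bigoplus_i P_i')+A(\bigoplus_i P_i')$. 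Substituting the already-known additivity of $A$ on each of the pure terms and cancelling $\sum_i A(P_i)$ and $\sum_i A(P_i')$ from both sides leaves $2A(\bigoplus_i P_i,\bigoplus_i P_i')=2\sum_i A(P_i,P_i')$, which is the claim after dividing by $2$.

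\textbf{Main obstacle.} The only genuinely delicate point is the bookkeeping in the first step: verifying that the vertex-wise sum of two combinatorially matched concatenations is again a legitimate concatenation, i.e.\ that the cancelling boundary arcs of $\bigoplus_i P_i$ and of $\bigoplus_i P_i'$ sit over the same combinatorial edges so that their sum still cancels, and that we are not silently using convexity or embeddedness anywhere (we are not — this is purely an identity of oriented areas of formal cycles). Once that is set up cleanly, the rest is just two invocations of bilinearity and one of $A(\bigoplus R_i)=\sum A(R_i)$, with no computation beyond expanding \eqref{eq:lincomb}. I would state the argument at the level of detail above and leave the edge-cancellation verification as the one sentence that needs care.
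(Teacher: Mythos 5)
Your proof is correct and follows essentially the same route as the paper: both arguments rest on the additivity $A(\bigoplus_i R_i)=\sum_i A(R_i)$ together with the observation that vertex-wise linear combination commutes with concatenation, the only cosmetic difference being that you polarize via $A(P+Q)-A(P)-A(Q)$ while the paper differentiates $A(P+tQ)$ at $t=0$. The ``delicate point'' you isolate (that the summed boundary arcs still cancel) is exactly the step the paper uses implicitly in the identity $(P_1\oplus P_2)+t(P_1'\oplus P_2')=(P_1+tP_1')\oplus(P_2+tP_2')$.
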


 \begin{proof} It is sufficient to consider the case $k=2$. We compute
    $
     A(P_1\oplus P_2,P_1'\oplus P_2')
    ={d\over dt}\big\vert_{t=0} A((P_1\oplus P_2)+t(P_1'\oplus P_2'))
     ={d\over dt}\big\vert_{t=0} A((P_1+tP_1')\oplus (P_2+tP_2'))
    = {d\over dt}\big\vert_{t=0} A(P_1+tP_1')
    + {d\over dt}\big\vert_{t=0} A(P_2+tP_2')
    =A(P_1,P_1')+A(P_2',P_2').
    $
 \end{proof}

\subsection{Signature of the area form.}

We still collect properties of the mixed area. This section is devoted to 
the zeros of the function $A(xP+yQ)$, where $P,Q$ are parallel $n$-gons in 
a 2\dash dimensional vector space $U$.

 \begin{figure}[t]
 \rightline{\begin{overpic}[width=\textwidth]{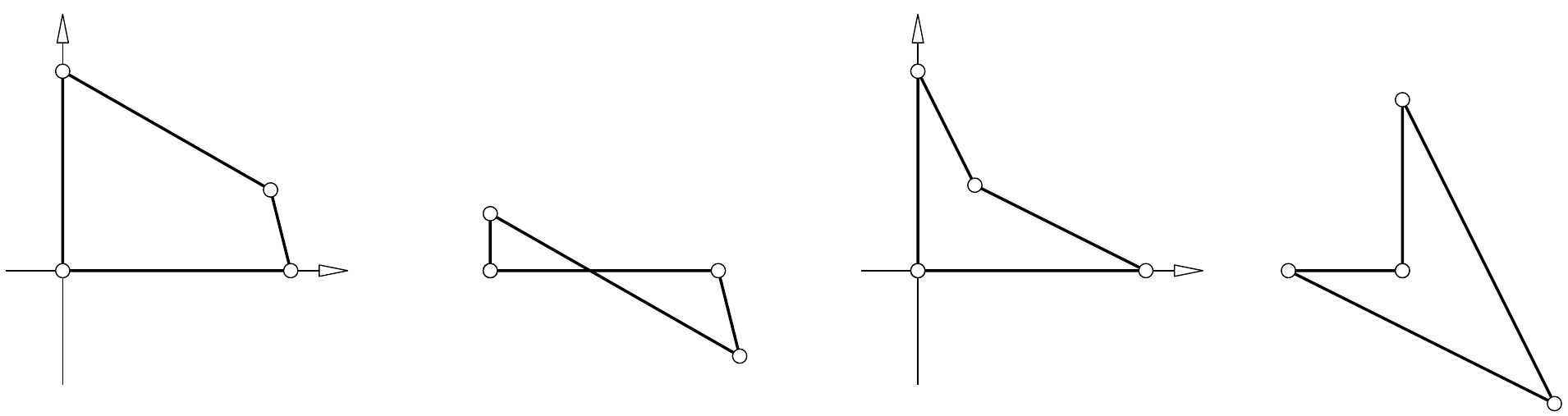}
 \cput(25,20){(a)}
 \cput(75,20){(b)}
 \lput(3,20){$p_0$}
 \lput(3,11){$p_1$}
 \put(17,6){$p_2$}
 \put(16,16){$p_3$}
 \lput(30,13){$p_0'$}
 \lput(30,9){$p_1'$}
 \put(46,11){$p_2'$}
 \put(48,3){$p_3'$}
 \end{overpic}}
 \caption{(a) Parallel quadrilaterals whose vertices lie
on the boundary of their convex hull. 
(b) Parallel quadrilaterals whose vertices do not lie on the boundary
of their convex hull.}
 \label{fig:signature}
 \end{figure}

 \begin{thm} \label{thm:definite} Consider a quadrilateral $P$ which is 
nondegenerate, i.e., three consecutive vertices are never collinear. Then 
the area form in the space of quadrilaterals parallel to $P$ is indefinite 
if and only if all vertices $p_0,\dots, p_3$ are extremal points of their 
convex hull. If $P$ degenerates into a triangle, then the area form is 
semidefinite.
 \end{thm}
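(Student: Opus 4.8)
The plan is to turn the area form into an explicit binary quadratic form, read off its discriminant, and translate the sign of that discriminant into the combinatorics of the four vertices.

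First I would coordinatize the parallel quadrilaterals. Write $e_i=p_{i+1}-p_i$, so that $e_0+e_1+e_2+e_3=0$; nondegeneracy means consecutive $e_i$ are linearly independent, so already $e_0,e_1$ span $U$. A quadrilateral $Q$ parallel to $P$ has edges $q_{i+1}-q_i=\lambda_i e_i$ with $\sum_i\lambda_i e_i=0$, and conversely every such tuple $(\lambda_0,\dots,\lambda_3)$ in the kernel of the map $(\lambda_i)\mapsto\sum_i\lambda_i e_i$ determines $Q$ up to a translation. That kernel is two-dimensional, since the map onto $U$ is surjective; thus the space of parallel quadrilaterals modulo translations is a plane, with $P$ itself at $(1,1,1,1)$. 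From Leibniz' sector formula, substituting $q_{i+1}=q_i+\lambda_i e_i$ and using the closing relation to discard the terms depending on the base point, the area becomes
\[
  A(Q)=\frac12\sum_{0\le i<j\le 3}\lambda_i\lambda_j\,\det(e_i,e_j),
\]
a quadratic form on that plane.

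Next I would compute its discriminant. Writing $e_2=ae_0+be_1$ and $e_3=ce_0+de_1$ (so $c=-1-a$, $d=-1-b$) and using the closing relation to eliminate $\lambda_0,\lambda_1$, the form $A$ becomes an explicit binary quadratic form in $\lambda_2,\lambda_3$, and a short computation --- or a manipulation of the quantities $\det(e_i,e_j)$ based on $\sum_i e_i=0$ --- shows that its discriminant has the same sign as
\[
  -\,\det(e_0,e_1)\,\det(e_1,e_2)\,\det(e_2,e_3)\,\det(e_3,e_0).
\]
Hence $A$ is indefinite, degenerate, or definite according as $\pi:=\det(e_0,e_1)\det(e_1,e_2)\det(e_2,e_3)\det(e_3,e_0)$ is positive, zero, or negative. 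Since $\det(e_{i-1},e_i)$ equals twice the signed area of the triangle $p_{i-1}p_ip_{i+1}$, each factor is nonzero precisely because no three consecutive vertices of $P$ lie on a line; this already settles the last sentence, for if $P$ degenerates into a triangle then some consecutive triple is collinear, the corresponding factor vanishes, $\pi=0$, and $A$ is semidefinite.

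It remains to show that $\pi>0$ if and only if all four vertices are extremal. Group $\pi=\bigl(\det(e_0,e_1)\det(e_2,e_3)\bigr)\bigl(\det(e_1,e_2)\det(e_3,e_0)\bigr)$; the first factor, being the product of the signed areas of the triangles $p_0p_1p_2$ and $p_2p_3p_0$ (which share the diagonal $p_0\vee p_2$), is positive exactly when $p_1$ and $p_3$ lie strictly on opposite sides of that line, and symmetrically for the second factor with the line $p_1\vee p_3$. I would then use the dichotomy that a nondegenerate quadrilateral either has all four vertices extremal or has exactly one vertex strictly inside the triangle of the other three --- a non-extremal vertex on the hull boundary would force a consecutive triple onto a line. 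If all four are extremal, the quadrilateral $p_0p_1p_2p_3$ is either convex, so that the diagonals $\overline{p_0p_2}$ and $\overline{p_1p_3}$ cross and both factors are positive, or crossed (self-intersecting), in which case those two segments are opposite edges of the convex hull and neither separates the complementary pair, so both factors are negative; either way $\pi>0$. If some vertex is interior, the diagonal joining it to the opposite vertex separates the remaining two while the other diagonal does not, so exactly one factor is positive and $\pi<0$, so $A$ is definite. The step I expect to require the most care is this last one: establishing the dichotomy and tracking the orientations of the four triangles $p_{i-1}p_ip_{i+1}$ in each combinatorial type, and in particular confirming that a crossed quadrilateral with vertices in convex position yields $\pi>0$ rather than $\pi<0$; the remainder is linear algebra and careful bookkeeping with orientations.
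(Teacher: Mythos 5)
Your argument is correct, and its core is the same as the paper's: restrict the area form to the two\dash dimensional space of quadrilaterals parallel to $P$ modulo translations and decide definiteness from the sign of the determinant of the resulting $2\times2$ form. The differences are in execution and are worth recording. The paper normalizes $P$ to ${0\choose1},{0\choose0},{1\choose0},{s\choose t}$, obtains the determinant $(1-s-t)/st$, and then asserts in a single clause that its sign matches the convex\dash hull condition; your invariant identity, that the determinant has the sign of $-\pi$ with $\pi=\prod_i\det(e_{i-1},e_i)$, is the same quantity ($\pi=st(s+t-1)$ in those coordinates, and the two determinants differ by the positive factor $(st)^2$ coming from the change of parameters), but it makes the geometric step transparent: each factor is twice the signed area of a triangle $p_{i-1}p_ip_{i+1}$, and your pairing of the factors by shared diagonals together with the convex / crossed / interior\dash vertex trichotomy supplies exactly the case analysis the paper leaves implicit. (Incidentally, the paper's clause ``two or none of $s,t,1-s-t$ are negative'' appears to be a slip --- three negatives are impossible, and the unit square $s=t=1$ has exactly one negative while being indefinite --- so your sign bookkeeping is the version to trust.) Two small points to tighten: you use ``discriminant'' once with the sign of $ac-b^2$ and once implicitly with the opposite convention, so fix one convention, since only the final equivalence ``indefinite iff $\pi>0$'' is unambiguous; and in the degenerate case your coordinatization by $e_0,e_1$ fails precisely when $p_0,p_1,p_2$ is the collinear triple, so you should first cyclically relabel to a linearly independent consecutive pair --- after that the same computation gives determinant $0$, hence semidefiniteness, in agreement with the paper's separate one\dash line argument that all parallel quadrilaterals then degenerate into triangles of a single orientation.
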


 \begin{proof} We choose an affine coordinate system such that $P$ has 
vertices ${0\choose 1}$, ${0\choose 0}$, ${1\choose 0}$, ${s\choose t}$ 
(cf.\ Figure \ref{fig:signature}). Translations have no influence on the 
area, so we restrict ourselves to computing the area of $Q$ parallel to 
$P$ with $q_1={0\choose 0}$, $q_3={s'\choose t'}$. Then $A(Q)= 
\mbox{\footnotesize$(s'\ t')$} \cdot \big({(t-1)/s \atop 1 }{1\atop 
(s-1)/t}\big) \cdot{s'\choose t'}$. The determinant of the form's matrix 
equals $(1-s-t)/st$, so the form is indefinite if and only if two or none 
of $s, t,1-s-t$ are negative, i.e., all vertices lie on the boundary of 
the convex hull.  In the degenerate case of three collinear vertices we 
compute areas of triangles all of which have the same orientation.
 \end{proof}

 \begin{prop} \label{prop:factorize} Assume that $n$-gons $P$, $Q$ are 
parallel but not related by a similarity transform. Consider the quadratic 
polynomial $\phi(x, y)=A(x P+y Q)$.
    \begin{mlist}

\item Suppose there is some combination $P'=\lambda P+\mu Q$ which is the 
vertex cycle of a strictly convex polygon $K$. Then $\phi$ factorizes and 
is not a square in $\R[x, y]$.

\item Assume that $n=4$ and that some combination $\lambda P+\mu Q$ is 
nondegenerate. Then $\phi$ is no square in $\R[x, y]$. It factorizes 
$\iff$ the vertices of $\lambda P+\mu Q$ are extremal points of their 
convex hull.

     \end{mlist}
     \end{prop}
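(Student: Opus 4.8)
The plan is to regard $\phi(x,y)=A(xP+yQ)$ as a binary quadratic form in $(x,y)$; by \eqref{eq:lincomb} its Gram matrix is $\left(\begin{smallmatrix}A(P)&A(P,Q)\\A(P,Q)&A(Q)\end{smallmatrix}\right)$. Over $\R[x,y]$ such a form factors into two real linear forms exactly when its discriminant $\delta:=A(P,Q)^2-A(P)A(Q)$ is $\ge0$; it is indefinite --- equivalently has two non-proportional linear factors --- exactly when $\delta>0$; it is definite, hence neither factors nor is a square, when $\delta<0$; and when $\delta=0$ it has rank $\le1$, which is the only case in which it can be a square. Since a change of basis of $W:=\mathrm{span}(P,Q)$ multiplies $\delta$ by a positive square and leaves the rank unchanged, everything reduces to analysing the area form on the $2$-plane $W$. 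Throughout we regard $P,Q$ as genuine polygons, so that ``not related by a similarity transform'' means $Q\ne rP+v$ for every $r\in\R$ and every translation $v$; in particular $P,Q$ are linearly independent, and one checks --- using parallelity and this hypothesis --- that $W$ contains no nonzero constant polygon.

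For part~1, choose $R$ so that $\{P',R\}$ is a basis of $W$, where $P'=\lambda P+\mu Q$ bounds the strictly convex polygon $K$; after reversing vertex orders if necessary, $P'$ is positively oriented (this replaces $\phi$ by $-\phi$, which is immaterial here). Strict convexity is an open condition, so for all small $t$ the polygon $P'+tR$ is again the positively oriented vertex cycle of a strictly convex polygon, and for small $t\ne t'$ the polygons $P'+tR$ and $P'+t'R$ are not homothetic --- a homothety between them would place a nonzero constant polygon in $W$. By Lemma~\ref{lem:convexmixed} the symbol $A(\cdot\,,\cdot)$ on such parallel, positively oriented convex cycles computes their genuine mixed area, so Minkowski's inequality in its sharp form applies:
$$A(P'+tR,\ P'+t'R)^2 \;>\; A(P'+tR)\,A(P'+t'R)\qquad(t\ne t'\text{ small}).$$
Expanding both sides in $t,t'$ using \eqref{eq:lincomb} and bilinearity, the difference of the two sides collapses to $\bigl(A(P',R)^2-A(P')A(R)\bigr)(t-t')^2$. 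Hence $\delta>0$, so $\phi$ is indefinite: it factors into two non-proportional real linear forms and is not a square.

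For part~2 let $P'=\lambda P+\mu Q$ be the nondegenerate quadrilateral. Every element of $W$ is parallel to $P'$, and the space of quadrilaterals parallel to $P'$ is, modulo translations, two-dimensional: such a quadrilateral is determined up to translation by its four signed edge lengths, which satisfy the two scalar equations expressing closedness. As $W$ meets the constant quadrilaterals only in $0$, the map $R\mapsto[R]$ (class modulo translation) embeds $W$ into this two-dimensional space, hence is an isomorphism; and since $A$ is translation invariant, $\phi$ is linearly equivalent to the area form on the quadrilaterals parallel to $P'$ --- exactly the form studied in Theorem~\ref{thm:definite}. Its proof displays that form, in suitable affine coordinates, with Gram determinant $(1-s-t)/(st)$, which is nonzero precisely because $P'$ is nondegenerate; so $\phi$ has rank $2$ and in particular is not a square. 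Finally, by Theorem~\ref{thm:definite} this rank-$2$ form is indefinite --- equivalently factors into two non-proportional linear forms --- exactly when all four vertices of $P'$ are extremal points of their convex hull, and is definite, without real linear factors, otherwise. This establishes both assertions of part~2.

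The step needing the most care is part~1: it genuinely uses Minkowski's inequality (a Brunn--Minkowski-type fact), not merely linear algebra, and one must verify that the perturbed polygons $P'+tR$ remain strictly convex and pairwise non-homothetic so that the sharp form of the inequality is available. In part~2 the delicate points are the dimension count identifying $W$ with the parallel quadrilaterals modulo translations, and the extraction of ``rank $2$'' from the determinant computed in the proof of Theorem~\ref{thm:definite}.
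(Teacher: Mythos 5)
Your proposal is correct and follows essentially the same route as the paper: part~1 perturbs the strictly convex combination to obtain a second, non-homothetic strictly convex polygon in the span and then invokes the sharp Minkowski inequality for the discriminant, and part~2 reduces to the area form computed in the proof of Theorem~\ref{thm:definite} via translation invariance. You merely make explicit some steps the paper leaves implicit (why the two convex polygons are not homothetic, and the dimension count identifying $\mathrm{span}(P,Q)$ with the parallel quadrilaterals modulo translations).
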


     \begin{proof} 1. Change $(\lambda,\mu)$ slightly to 
$(\lambda',\mu')$, such that $|{\lambda\atop\mu}{\lambda'\atop\mu'}|\ne 0$ 
and $Q':=\lambda'P+\mu'Q$ still bounds a strictly convex polygon, denoted 
by $L$. Consider $\phi'(x, y)=A(x P'+yQ')$. As $\phi$ and $\phi'$ are 
related by a linear substitution of parameters, it is sufficient to study 
the factors of $\phi'$: According to \eqref{eq:lincomb}, the discriminant 
of $\phi'$ equals $4(A(P',Q')^2 -A(Q')A(P'))= 4(A(K,L)^2 -A(K)A(L))$, 
which is positive by Minkowski's inequality \cite{schneider-1993}. The 
statement follows.

In case 2 we observe that {\em any} element polygon parallel to $P$ arises 
from some $xP+yQ$ by a translation which does not change areas. It is 
therefore sufficient to consider the areas of the special quads treated in 
the proof of Theorem \ref{thm:definite}. The matrix of the area form which 
occurs there is denoted by $G$. Obviously $\phi$ factorizes $\iff$ $\det G 
\le 0$ $\iff$ the area form is indefinite or rank deficient. We see that 
$\det G\ne 0$, so rank deficiency does not occur (and consequently $\phi$ 
is no square). We use Theorem \ref{thm:definite} to conclude that $\phi$ 
factorizes $\iff$ the vertices of $\lambda P+\mu Q$ lie on the boundary of 
their convex hull.
 \end{proof}

\section{Curvatures of a parallel mesh pair}
\label{sec:curvature}

Our construction of curvatures for discrete surfaces is similar to the 
curvatures defined in relative differential geometry \cite{simon:1992}, 
which are derived from a field of `arbitrary' normal vectors. If the 
normal vectors employed are the usual Euclidean ones, then the curvatures, 
too, are the usual Euclidean curvatures.

A definition of curvatures which is transferable from the smooth to the 
discrete setting is the one via the change in surface area when we 
traverse a 1\dash parameter family of offset surfaces. Below we first 
review the smooth case, and afterwards proceed to discrete surfaces.

\subsection{Review of relative curvatures for smooth surfaces.}

Consider a smooth 2\dash dimensional surface $M$ in $\R^3$ which is 
equipped with a distinguished ``unit'' normal vector field $n:M\to \R^3$. 
It is required that for each tangent vector $v\in T_p M$, the vector 
$dn_p(v)$ is parallel to the tangent plane $T_p M$, so we may define a 
Weingarten mapping $\sigma_p:T_p M\to T_p M$ by $\sigma_p(v)=-dn_p(v)$ (a 
unit normal vector field in Euclidean space $\R^3$ fulfills this 
property). Then Gaussian curvature $K$ and mean curvature $H$ of the 
submanifold $M$ with respect to the normal vector field $n$ are defined as 
coefficients of $\sigma_p$'s characteristic polynomial
    \begin{equation}
    \chi_{\sigma_p}(\lambda,\mu):= \det(\lambda\id +\mu\sigma_p) =
    \lambda^2+2\lambda\mu H(p)+\mu^2 K(p).
    \end{equation}
 We consider an {\em offset surface} $M^\delta$, which is the image of $M$ 
under the offsetting map $e^\delta:p\mapsto p+\delta \cdot n(p)$. Clearly, 
tangent spaces in corresponding points of $M$ and $M^\delta$ are parallel, 
and corresponding surface area elements are related by
    \begin{equation}
    {dA^\delta\over dA~}\Big\vert_p=\det (de^\delta_p) =
    \det(\id+\delta \cdot dn)=
    \det(\id -\delta\sigma_p)=
	1-2\delta H + \delta^2 K,
    \label{eq:areasmooth}
    \end{equation}
 provided this ratio is positive. This equation has a direct analogue in 
the discrete case, which allows us to define curvatures for discrete 
surfaces.

\subsection{Curvatures in the discrete category.}

Let $m$ be a polyhedral surface with a parallel mesh $s$. We would like to 
think of $s$ as the Gauss image of $m$, but so far $s$ is arbitrary.  The 
meshes $m^\delta$ are {\em offsets} of $m$ at distance $\delta$ 
(constructed w.r.t.\ to the Gauss image mesh $s$). For each face $f\in F$, 
the $n$-gons $m(f)$, $s(f)$, and $m^\delta(f)$ lie in planes parallel to 
some two\dash dimensional subspace $U_{f}$. The area form in $U_f$ and the 
derived mixed area are both denoted by the symbol $A$. We have the 
following property:

 \begin{thm} If $m, s$ is a parallel mesh pair, then the area 
$A(m^\delta(f))$ of a face $f$ of an offset $m^\delta=m+\delta s$ obeys 
the law
    \begin{align}
    \label{eq:totalarea}
    & A (m^\delta(f))
    = (1 -2\delta H_f + \delta^2 K_f)
        A(m(f)), \quad \mbox{where}
    \\
    & H_f = -{A(m(f), s(f))\over A(m(f))}, \quad
    K_f = {A(s(f))\over A(m(f))}.
    \label{eq:gaussmean}
    \end{align}
    \end{thm}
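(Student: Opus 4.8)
The plan is to reduce the statement to the algebraic identity for the area form already recorded in Equation~\eqref{eq:lincomb}. Since $m$, $s$ are parallel meshes, for each face $f$ the polygons $m(f)$ and $s(f)$ are parallel $n$-gons lying in planes parallel to the two-dimensional subspace $U_f$. Translating all three polygons $m(f)$, $s(f)$, $m^\delta(f)$ into $U_f$ itself does not change their (oriented) areas, so we may regard $m(f),s(f)\in U_f^n$ and use the quadratic form $A$ on $U_f^n$ together with its associated symmetric bilinear form, also written $A$.

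First I would write $m^\delta(f) = m(f) + \delta\, s(f)$ vertexwise, which is exactly the definition of the offset mesh restricted to the face $f$. Then I would apply \eqref{eq:lincomb} with $P = m(f)$, $Q = s(f)$, $\lambda = 1$, $\mu = \delta$, giving
    \begin{equation*}
    A(m^\delta(f)) = A(m(f)) + 2\delta\, A(m(f), s(f)) + \delta^2 A(s(f)).
    \end{equation*}
Assuming $A(m(f))\neq 0$ (the face is nondegenerate, which is implicit in speaking of its area and of offsets), I would factor out $A(m(f))$ to obtain
    \begin{equation*}
    A(m^\delta(f)) = \Bigl(1 + 2\delta\,\tfrac{A(m(f),s(f))}{A(m(f))} + \delta^2\,\tfrac{A(s(f))}{A(m(f))}\Bigr) A(m(f)),
    \end{equation*}
and then reading off $H_f = -A(m(f),s(f))/A(m(f))$ and $K_f = A(s(f))/A(m(f))$ yields precisely \eqref{eq:totalarea} and \eqref{eq:gaussmean}.

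Honestly, there is no serious obstacle here: the theorem is essentially a restatement of the bilinearity property \eqref{eq:lincomb} together with the discrete Steiner formula, mirroring the smooth computation in \eqref{eq:areasmooth}. The only point requiring a word of care is the justification that passing from the planes containing $m(f)$ and $s(f)$ to the common model space $U_f$ is harmless --- this is because oriented area is an affine invariant, unchanged by the translations identifying these parallel planes, and Leibniz' sector formula \eqref{eq:area} is manifestly translation-invariant. I would also note that the sign conventions in $H_f$ match the smooth Weingarten-map normalization exactly as in \eqref{eq:areasmooth}, so that when $s$ is the Euclidean Gauss map one recovers the classical mean and Gaussian curvatures up to the usual discretization.
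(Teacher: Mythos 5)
Your proof is correct and follows essentially the same route as the paper's: the theorem is a direct face-wise application of the bilinearity identity \eqref{eq:lincomb} with $\lambda=1$, $\mu=\delta$, followed by dividing by $A(m(f))$. The only small point the paper adds, which you omit, is the remark that since all determinant forms on $U_f$ are proportional, $H_f$ and $K_f$ do not depend on the choice of the area form $A$; this does not affect the validity of your argument.
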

    \begin{proof}
 Equation \eqref{eq:totalarea} can be shown face\dash wise and is then a 
direct consequence of \eqref{eq:lincomb}. As all determinant forms in a 
vector space are multiples of each other, neither $H_f$ nor $K_f$ depend 
on the choice of $A$.
 \end{proof}

Because of the analogy between Equations \eqref{eq:areasmooth} and
\eqref{eq:totalarea}, we define:

 \begin{defn} The functions $K_f,H_f$ of \eqref{eq:gaussmean} are the 
Gaussian and mean curvatures of the pair $(m,s)$, i.e. of the polyhedral 
surface $m$ with respect to the Gauss image $s$. They are associated to 
the faces of $m$.
 \end{defn}

Obviously, mean and Gaussian curvatures are only defined for faces of 
nonvanishing area. They are attached to the pair $(m, s)$ in an affine 
invariant way. There is a further obvious analogy between the smooth and 
the discrete cases: The Gauss curvature is the quotient of (infinitesimal) 
corresponding areas in the Gauss image and the original surface.

 \subsection{Existence of principal curvatures}

Similar to the smooth theory, we introduce principal curvatures 
$\kappa_1$, $\kappa_2$ of a face as the zeros of the quadratic polynomial 
$x^2-2Hx+K$, where $H$, $K$ are the mean and Gaussian curvatures. We shall 
see that in ``most'' cases that polynomial indeed factorizes, so principal 
curvatures exist. The precise statement is as follows:

 \begin{prop} \label{prop:quads} Consider a polyhedral surface $m$ with 
Gauss image $s$, and assume that for each face $f\in F$ mean and Gaussian 
curvatures $H_f$, $K_f$ are defined. Regarding the existence of principal 
curvatures $\kappa_{1, f}$ and $\kappa_{2, f}$, we have the following 
statements:
 \begin{mlist}

 \item For a quadrilateral $f$, $\kappa_{1, f}=\kappa_{2, f}$ $\iff$ 
$m(f)$, $s(f)$ are related by a similarity. If this is not the case, 
$\kappa_{i, f}$ exist $\iff$ the vertices of $m(f)$ or of $s(f)$ lie on 
the boundary of their convex hull.

 \item Suppose some linear combination of the $n$-gons $m(f)$, $s(f)$ is 
the boundary cycle of a strictly convex polygon. Then $\kappa_{i, f}$ 
exist, and $\kappa_{1, f}=\kappa_{2, f}$ $\iff$ $m(f)$ and $s(f)$ are 
related by a similarity transform.

\item Suppose $f$ is a quadrilateral and the Gauss image $s$ is inscribed 
in a strictly convex surface $\Sigma$. Then principal curvatures exist. 
They are equal if and only if $m(f)$ and $s(f)$ are related by a 
similarity transform.
  \end{mlist} \end{prop}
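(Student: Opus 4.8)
The plan is to reduce all three items to the already-proven Theorem~\ref{thm:definite} and Proposition~\ref{prop:factorize}, since principal curvatures $\kappa_{1,f},\kappa_{2,f}$ are precisely the roots of $x^2-2H_fx+K_f$, and by \eqref{eq:gaussmean} this polynomial has the same roots (up to the positive factor $A(m(f))$, assumed nonzero) as
$$
A(m(f))\,x^2 - 2A(m(f),s(f))\,x + A(s(f)) = A\bigl(x\,m(f) - s(f)\bigr) =: \psi(x).
$$
So ``principal curvatures exist'' means exactly ``$\psi$ factorizes over $\R$'', and ``$\kappa_{1,f}=\kappa_{2,f}$'' means ``$\psi$ is a square (has a double root)''. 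The homogenization of $\psi$ is $\phi(x,y)=A(x\,m(f)-y\,s(f))$, which is the polynomial $\phi$ of Proposition~\ref{prop:factorize} applied to the parallel polygons $P=m(f)$, $Q=-s(f)$ (note $-s(f)$ is parallel to $m(f)$ since $s(f)$ is; replacing $Q$ by $-Q$ only changes the sign of the $xy$-coefficient and does not affect factorizability, squareness, or the convex-hull condition, as $\lambda P+\mu(-Q)$ has the same vertex set shape as $-\lambda P + \mu Q$).

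\textbf{Item 1.} Here $f$ is a quadrilateral. First I would dispose of the equality case: $\phi$ is a perfect square in $\R[x,y]$ iff its discriminant vanishes iff $A(m(f),s(f))^2 = A(m(f))A(s(f))$; but more cleanly, $\kappa_{1,f}=\kappa_{2,f}$ forces $\psi(x)=A(m(f))(x-\kappa)^2$, i.e.\ $A(\kappa\,m(f)-s(f))=0$ and the bilinear form degenerates in the direction $\kappa\,m(f)-s(f)$ — by the nondegeneracy analysis this is equivalent to $m(f)$ and $s(f)$ being homothetic, hence similar. Conversely a similarity $s(f)=\kappa\, m(f)+\text{const}$ gives $\psi(x)=A(m(f))(x-\kappa)^2$ directly. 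When they are not similar, Proposition~\ref{prop:factorize}(2) applies with the nondegenerate combination being $m(f)$ itself (we may assume $m(f)$ nondegenerate, otherwise pick any nondegenerate $\lambda P+\mu Q$; if none exists the statement is handled separately): $\phi$ factorizes iff the vertices of that combination lie on the boundary of their convex hull. One subtlety: Proposition~\ref{prop:factorize}(2) phrases the condition in terms of \emph{one} particular nondegenerate combination, whereas item~1 says ``$m(f)$ or $s(f)$''. I would note that the convex-position property, being equivalent to factorizability of $\phi$, is the \emph{same} for every nondegenerate combination in the pencil; so it holds for $m(f)$ iff it holds for $s(f)$ iff it holds for any such combination — this is where I'd spend a sentence making the ``or'' legitimate rather than a genuine disjunction.

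\textbf{Item 2.} Here some $\lambda\, m(f)+\mu\, s(f)$ bounds a strictly convex polygon. I would apply Proposition~\ref{prop:factorize}(1) directly to get that $\phi$ factorizes and is not a square, hence $\kappa_{1,f}\neq\kappa_{2,f}$ and both exist — unless $m(f)$ and $s(f)$ are related by a similarity, which is exactly the hypothesis excluded in Proposition~\ref{prop:factorize} (``parallel but not related by a similarity transform''). So the remaining case is: if $m(f)$ and $s(f)$ \emph{are} similar, then as in item~1 $\psi$ is $A(m(f))(x-\kappa)^2$, so $\kappa_{1,f}=\kappa_{2,f}$ and principal curvatures trivially exist. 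This gives the stated equivalence.

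\textbf{Item 3.} Now $f$ is a quadrilateral and $s$ is inscribed in a strictly convex surface $\Sigma\subset\R^3$; the face $s(f)$ is a planar quadrilateral whose four vertices lie on $\Sigma$. The key geometric observation is that the four vertices of $s(f)$ lie on the planar \emph{strictly convex} curve $\Sigma\cap U_f$ (the section of $\Sigma$ by the plane through $s(f)$, which is strictly convex because $\Sigma$ is), hence they are in strictly convex position — in particular every vertex is an extremal point of the convex hull of $\{s_0,\dots,s_3\}$. Taking the combination $0\cdot m(f)+1\cdot s(f)=s(f)$ in Proposition~\ref{prop:factorize}(2) (it is nondegenerate: three vertices of $s(f)$ on a strictly convex curve cannot be collinear), its vertices are extremal points of their convex hull, so $\phi$ factorizes; thus $\kappa_{1,f},\kappa_{2,f}$ exist. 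The equality statement is again Proposition~\ref{prop:factorize}(2)'s ``$\phi$ is no square'' together with the item-1 argument: $\kappa_{1,f}=\kappa_{2,f}$ iff $m(f),s(f)$ are similar. The main obstacle is the one genuinely non-combinatorial point in item~3 — justifying that a planar section of a strictly convex surface is a strictly convex curve and hence forces the four Gauss-image vertices into convex position — together with, across all three items, carefully matching the ``three consecutive vertices never collinear'' nondegeneracy hypothesis of the cited results to the meshes actually at hand and making the ``$m(f)$ or $s(f)$'' phrasing precise. Everything else is bookkeeping translating between $\psi$, $\phi$, and the curvature polynomial $x^2-2H_fx+K_f$.
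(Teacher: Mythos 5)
Your proposal follows the paper's own proof essentially verbatim: pass to the quadratic polynomial $A(x\,m(f)+y\,s(f))$, which is $A(m(f))$ times $x^2-2H_fxy+K_fy^2$, reduce items 1 and 2 to Proposition~\ref{prop:factorize}, and for item 3 observe that the vertices of $s(f)$, lying in a planar section of the strictly convex surface $\Sigma$, are in convex position so that item 1 applies. One small slip worth noting: with your choice $\psi(x)=A(x\,m(f)-s(f))$ the normalized polynomial is $x^2+2H_fx+K_f$ (since $H_f=-A(m(f),s(f))/A(m(f))$), whose roots are $-\kappa_{1,f},-\kappa_{2,f}$ rather than ``the same roots'' as $x^2-2H_fx+K_f$ — this is harmless for existence, equality, and factorizability, but the paper's sign convention $A(x\,m(f)+y\,s(f))$ avoids the discrepancy altogether.
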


  \begin{proof} We consider the polynomial $\phi(x, y):=A(x\cdot 
m(f)+y\cdot s(f))$ as in Prop.\ \ref{prop:factorize}. The area of $m(f)$ 
is nonzero, otherwise curvatures are not defined. Thus, $\phi(x, y)$ is 
proportional to $\wt\phi(x, y):=x^2 - 2H_f x y + K_f y^2)$, and linear 
factors of $\phi$ correspond directly to linear factors of $g(x) 
:=\wt\phi(x,1)=x^2-2Hx+K$. So statements 1,2 follow directly from Prop.\ 
\ref{prop:factorize}. As to the third statement, note that the vertices of 
an $n$-gon which lie in a planar section of $\Sigma$ always are contained 
in the boundary of their convex hull, so we can apply 1.
 \end{proof}

\subsection{Edge curvatures}

In a smooth surface, a tangent vector $v\in T_p M$ indicates a {\em 
principal direction} with principal curvature $\kappa$, if and only if 
$-dn(v)=\kappa v$. For a discrete surface $m$ with combinatorics 
$(V,E,F)$, a tangent vector is replaced by an edge $(i, j)\in E$. By 
construction, edges $m_im_j$ are parallel to corresponding edges $s_is_j$ 
in the Gauss image mesh. We are therefore led to a {\em curvature 
$\kappa_e$ associated with the edge $e$}, which is defined by
	\begin{equation} e=(i,j)\in E 
	\implies
	s_j-s_i=\kappa_{i,j}(m_i-m_j) 
	\end{equation}
 (see Figure \ref{fig:edgecurv}). For a quad\dash dominant mesh this 
interpretation of all edges as principal curvature directions is 
consistent with the fact that discrete surface normals adjacent to an edge 
are co\dash planar \cite{pottmann-2008-fg}.

The newly constructed principal curvatures associated with edges are 
different from the previous ones, which are associated with faces. For a 
quadrilateral however, it is not difficult to relate the edge curvatures 
with the previously defined face curvatures:

\begin{prop} Consider a polyhedral surface $m$ with Gauss image $s$, and 
corresponding quadrilateral faces $m(f)=(m_0,\dots,m_3)$, 
$s(f)=(s_0,\dots,s_3)$. Then mean and Gaussian curvatures of that face are 
computable from its four edge curvatures by
    \begin{align}
        H_f
    &=
        {\kappa_{01} \kappa_{23} - \kappa_{12} \kappa_{30}
        \over
        \kappa_{01} + \kappa_{23} - \kappa_{12} - \kappa_{30}},
    \\
        K_f
    &=
        {
        \kappa_{01}\kappa_{12}\kappa_{23}\kappa_{30}
        \over
        \kappa_{01} + \kappa_{23} - \kappa_{12} - \kappa_{30}}
        \Big(
        {1\over \kappa_{12}}
        +{1\over \kappa_{30}}
        -{1\over \kappa_{01}}
         -{1\over \kappa_{23}}
        \Big)
    \end{align}
\end{prop}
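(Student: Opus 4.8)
The plan is to push everything down to the edge vectors of the quadrilateral, use the elementary ``diagonal'' formula for the area of a quadrilateral together with its polar form, and then eliminate the remaining degrees of freedom by invoking two closure relations --- the obvious one for $m(f)$ and the one forced on $s(f)$ by the edge curvatures. What survives is a rational expression in $\kappa_{01},\kappa_{12},\kappa_{23},\kappa_{30}$, and comparing it with the claim is then routine. The identity is read wherever its right-hand side makes sense, i.e.\ when $\kappa_{01}+\kappa_{23}-\kappa_{12}-\kappa_{30}\ne0$; the computation will also show that this is exactly the condition that $m(f)$ and $s(f)$ are not related by a similarity, and that it forces $\kappa_{23}\ne\kappa_{30}$.

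First I would set $a=m_1-m_0$, $b=m_2-m_1$, $c=m_3-m_2$, $d=m_0-m_3$, so that the closure of $m(f)$ reads $a+b+c+d=0$. Rewriting the defining relation $s_j-s_i=\kappa_{i,j}(m_i-m_j)$ on the four boundary edges gives $s_1-s_0=-\kappa_{01}a$, $s_2-s_1=-\kappa_{12}b$, $s_3-s_2=-\kappa_{23}c$, $s_0-s_3=-\kappa_{30}d$ --- here keeping the signs consistent is the first small trap. Adding the four identities and using $a+b+c+d=0$ produces the second linear relation
\[
\kappa_{01}a+\kappa_{12}b+\kappa_{23}c+\kappa_{30}d=0.
\]

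Second, I would record that Leibniz' formula \eqref{eq:area} for a quadrilateral $P=(p_0,\dots,p_3)$ simplifies, after a translation, to $A(P)=\tfrac12\det(p_2-p_0,\,p_3-p_1)$, whence, by bilinearity of the determinant, the bilinear form associated to it via \eqref{eq:lincomb} is $A(P,Q)=\tfrac14\big(\det(p_2-p_0,\,q_3-q_1)+\det(q_2-q_0,\,p_3-p_1)\big)$. Substituting $m_2-m_0=a+b$, $m_3-m_1=b+c$, $s_2-s_0=-\kappa_{01}a-\kappa_{12}b$, $s_3-s_1=-\kappa_{12}b-\kappa_{23}c$, and writing $c=\alpha a+\beta b$ in the basis $a,b$ (so that $d=-(1+\alpha)a-(1+\beta)b$), each of $A(m(f))$, $A(s(f))$, $A(m(f),s(f))$ becomes the single determinant $D:=\det(a,b)$ times an explicit polynomial in $\alpha,\beta$ and the $\kappa_{ij}$; in particular $A(m(f))=\tfrac12 D\,(1-\alpha+\beta)$.

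Finally, reading the second closure relation coefficient-wise in the basis $a,b$ gives two scalar equations that solve to $\alpha=(\kappa_{30}-\kappa_{01})/(\kappa_{23}-\kappa_{30})$ and $\beta=(\kappa_{30}-\kappa_{12})/(\kappa_{23}-\kappa_{30})$; then $1-\alpha+\beta=(\kappa_{01}+\kappa_{23}-\kappa_{12}-\kappa_{30})/(\kappa_{23}-\kappa_{30})$, so $A(m(f))\ne0$ indeed forces $\kappa_{23}\ne\kappa_{30}$ and identifies the common denominator $\kappa_{01}+\kappa_{23}-\kappa_{12}-\kappa_{30}$ with $2(\kappa_{23}-\kappa_{30})A(m(f))/D\ne0$. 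Inserting $\alpha,\beta$ into $H_f=-A(m(f),s(f))/A(m(f))$ and $K_f=A(s(f))/A(m(f))$ from \eqref{eq:gaussmean}, the factor $D$ cancels and the factor $\kappa_{23}-\kappa_{30}$ cancels between numerator and denominator; simplifying the numerators then collapses them to $\kappa_{01}\kappa_{23}-\kappa_{12}\kappa_{30}$ for $H_f$ and to $\kappa_{01}\kappa_{12}\kappa_{23}-\kappa_{01}\kappa_{12}\kappa_{30}+\kappa_{01}\kappa_{23}\kappa_{30}-\kappa_{12}\kappa_{23}\kappa_{30}$ for $K_f$, both over the common denominator $\kappa_{01}+\kappa_{23}-\kappa_{12}-\kappa_{30}$; the latter numerator equals $\kappa_{01}\kappa_{12}\kappa_{23}\kappa_{30}\big(\tfrac1{\kappa_{12}}+\tfrac1{\kappa_{30}}-\tfrac1{\kappa_{01}}-\tfrac1{\kappa_{23}}\big)$ whenever all edge curvatures are nonzero, which is the stated form. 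The only genuine obstacle is clerical: keeping the signs straight in the edge-curvature identities and carrying the determinant expansions and the elimination of $\alpha,\beta$ through without slips --- there is no conceptual difficulty once both closure relations are in hand.
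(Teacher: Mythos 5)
Your computation is correct --- I checked the sign conventions, the diagonal formula $A(P)=\tfrac12\det(p_2-p_0,p_3-p_1)$ and its polarization, the solution $\alpha=(\kappa_{30}-\kappa_{01})/(\kappa_{23}-\kappa_{30})$, $\beta=(\kappa_{30}-\kappa_{12})/(\kappa_{23}-\kappa_{30})$ of the closure relation, and the final simplifications for $H_f$ and $K_f$; all of it works out to the stated formulas. However, your route is genuinely different from the paper's. The paper introduces the intersection point $x^*$ of the diagonals, writes $x^*=(1-\alpha_f)m_1+\alpha_f m_3=(1-\beta_f)m_2+\beta_f m_0$, and uses the same closure relation $\sum\kappa_{i,i+1}(m_{i+1}-m_i)=0$ to express $H_f$ as an affine combination, with weight $\alpha_f$, of the averages $\tfrac12(\kappa_{23}+\kappa_{30})$ and $\tfrac12(\kappa_{01}+\kappa_{12})$, and $K_f$ as the corresponding combination of the products $\kappa_{23}\kappa_{30}$ and $\kappa_{01}\kappa_{12}$; doing the same with $\beta_f$ for the other diagonal and equating the two representations eliminates $\alpha_f,\beta_f$ and yields the formulas. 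That version is shorter and exposes $H_f$ and $K_f$ as weighted means of curvature data attached to opposite corners, whereas yours is a self-contained coordinate elimination in the edge basis $a,b$ whose only risk is clerical; it also delivers, as a by-product, the clean statements that the common denominator equals $2(\kappa_{23}-\kappa_{30})A(m(f))/\det(a,b)$ and that its vanishing corresponds to the similarity case. One small caveat on your side: reading $\kappa_{01}a+\kappa_{12}b+\kappa_{23}c+\kappa_{30}d=0$ coefficient-wise requires $a,b$ to be linearly independent, i.e., $m_0,m_1,m_2$ not collinear; this is the same flavour of nondegeneracy the paper tacitly needs for the diagonals to intersect, and for a nondegenerate quadrilateral in the sense of Theorem~\ref{thm:definite} some pair of consecutive edges always spans the plane, so the gap is cosmetic.
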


\begin{figure}[t]
	\medskip
	\centerline{\begin{overpic}[width=.45\columnwidth]{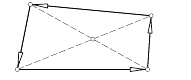}
        \cput(30,19){$m(f)$}
        \put(10,-2){$m_0$}
        \put(70,-2){$m_1$}
        \lput(14,43){$m_3$}
        \cput(78,39){$m_2$}
        \end{overpic}\hfill
   \begin{overpic}[width=.38\columnwidth]{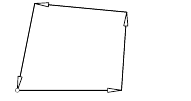}
        \put(25,25){$s(f)$}
        \put(30,53){$-\kappa_{23}(m_3-m_2)$}
        \cput(33,-4){$-\kappa_{01}(m_1-m_0)$}
        \put(67,20){$-\kappa_{12}(m_2-m_1)$}
        \lput(12,36){$-\kappa_{30}(m_0-m_3)$}
        \end{overpic}\hfill \vphantom{x}}
    \caption{Edge curvatures $\kappa_{i, i+1}$ associated with
a quadrilateral $m_0,\dots, m_3$ in a polyhedral surface $m$
with Gauss image $s$.}
    \label{fig:edgecurv}
\end{figure}

\begin{proof}
 We determine $\alpha_f$ such that $x^*:=(m_0\vee m_2)\cap(m_1\vee 
m_3)=(1-\alpha_f)m_1+\alpha_f m_3$. Likewise we determine $\beta_f$ such 
that $x^*=(1-\beta_f)m_2+\beta_f m_0$. The condition $\sum_{0\le i<4} 
\kappa_{i, k+1}(m_{i+1}-m_i) =0$ after some elementary manipulations leads 
to
    \begin{align*}
    H_f & = (1-\alpha_f) {\kappa_{23}+\kappa_{30}\over 2}
    +\alpha_f {\kappa_{01}+\kappa_{12}\over 2},
    \quad
    K_f  = (1-\alpha_f) \kappa_{23}\kappa_{30}
    + \alpha_f \kappa_{01}\kappa_{12}.
    \\
    H_f & = (1-\beta_f) {\kappa_{30}+\kappa_{01}\over 2}
    +\beta_f {\kappa_{12}+\kappa_{23}\over 2},
    \quad
    K_f  = (1-\beta_f) \kappa_{30}\kappa_{01}
    + \beta_f \kappa_{12}\kappa_{23}.
    \end{align*}
Equating the two expressions for $H_f$ and $K_f$  yields
the result.
 \end{proof}

\begin{remark} Using the line congruence $L_i=m_i\vee (m_i+s_i)$ (cf.\ 
Section \ref{sec:2}), for each edge $e=(i, j)$, we define a center of 
curvature associated with an edge $m_im_j$ as the point $c_e = L_i\cap 
L_j$. The familiar concept of curvature as the inverse distance of the 
center of curvature from the surface is reflected in the fact that the 
triangles $0s_is_j$ and $c_{e}m_im_j$ are transformed into each other by a 
similarity transformation with factor $1/\kappa_{e}$.
 \end{remark}

\section{Christoffel duality and discrete Koenigs nets}

We start with a general definition:

\begin{defn} \label{def:dual} Polyhedral surfaces $m, s$ are {\em 
Christoffel dual} to each other,
	$$s= m^*,
	$$
 if they are parallel, and their corresponding faces have vanishing mixed 
area (i.e., are orthogonal with respect to the corresponding bilinear 
symmetric form). Polyhedral surfaces possessing Christoffel dual are 
called {\em Koenigs nets}.
 \end{defn}

Duality is a symmetric relation, and obviously all meshes $s$ dual to $m$ 
form a linear space. In the special case of quadrilateral faces, duality 
is recognized by a simple geometric condition:

\begin{thm}\label{thm: dual=orthogonal}
 {\bf (Dual quadrilaterals via mixed area)} Two quadrilaterals $P=(p_1$, 
$p_2$, $p_3$, $ p_4)$ and $Q=(q_1$, $q_2$, $q_3$, $q_4)$ with parallel 
corresponding edges, $p_{i+1}-p_i\parallel q_{i+1}-q_i$, 
$i\in\mathbb{Z}\pmod 4$ are dual, i.e.,
	$$
	A(P,Q)=0
	$$
 if and only if their non\dash corresponding diagonals are parallel:
	$$
	(p_1p_3)\parallel(q_2q_4),\quad (p_2p_4)\parallel(q_1q_3).
	$$
 \end{thm}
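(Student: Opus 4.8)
The plan is to compute the mixed area $A(P,Q)$ directly from Leibniz' sector formula \eqref{eq:area} applied to the bilinear form, using the parallelity of edges to reduce everything to a single scalar parameter per edge, and then to recognize the vanishing condition as a statement about diagonals. Concretely, I would translate so that $p_1=q_1=0$ (translations do not affect area or mixed area), write $e_i=p_{i+1}-p_i$ for the edge vectors of $P$, and use parallelity to write $q_{i+1}-q_i=\lambda_i e_i$ for scalars $\lambda_i$. The closing condition $\sum e_i=0$ for $P$ and $\sum\lambda_i e_i=0$ for $Q$ are the only constraints. Since all four edge directions live in the $2$-dimensional space $U$ and $P$ is (generically) nondegenerate, two consecutive edges, say $e_1,e_2$, form a basis, and $e_3,e_4$ are determined by the closing condition; similarly $\lambda_3,\lambda_4$ are constrained. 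Then $A(P,Q)$ becomes an explicit bilinear expression in the $\lambda_i$ with coefficients built from $\det(e_i,e_j)$, and I would simplify it to a clean form.

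The key algebraic step is to show that, after using $\sum e_i = 0$ and $\sum \lambda_i e_i = 0$ to eliminate redundancy, the mixed area takes the factored shape
    \begin{equation}
    2A(P,Q) = c\cdot\det(p_3-p_1,\ q_2-q_4)
    \label{eq:plan-factored}
    \end{equation}
for a nonzero scalar $c$ depending only on the combinatorial data (an analogous identity with the roles of the diagonals swapped should hold simultaneously, or follow by symmetry of $A$). Here the right-hand side, written back in untranslated coordinates, is exactly $\det$ of the diagonal vectors $p_1p_3$ and $q_2q_4$; it vanishes iff these two diagonals are parallel. Because $A$ is symmetric and the hypotheses on $P$ and $Q$ are symmetric under swapping $(P,Q)$, once \eqref{eq:plan-factored} is established the parallelism $(p_1p_3)\parallel(q_2q_4)$ is equivalent to $A(P,Q)=0$, and by the same computation applied to the other diagonal pair, $A(P,Q)=0$ also forces $(p_2p_4)\parallel(q_1q_3)$; conversely either diagonal condition alone suffices. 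I would also remark that one should check the two diagonal conditions are in fact equivalent to each other given the parallel-edge hypothesis, which again falls out of the same linear algebra (the degenerate cases where a diagonal direction coincides with an edge direction are handled separately and are easily seen to be consistent with the statement).

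The main obstacle I anticipate is purely bookkeeping: carrying out the elimination cleanly so that the factor $c$ in \eqref{eq:plan-factored} is visibly nonzero (this is where nondegeneracy of the quadrilaterals enters) and so that the final determinant is unambiguously the diagonal determinant rather than some other antisymmetric combination. A convenient way to keep this under control is to pick the affine normalization already used in the proof of Theorem \ref{thm:definite}, namely place three vertices of $P$ at ${0\choose 1}$, ${0\choose 0}$, ${1\choose 0}$ and the fourth at ${s\choose t}$, and correspondingly normalize $Q$; then $A(P,Q)$ is literally the bilinear form with the matrix $\big({(t-1)/s\atop 1}{1\atop(s-1)/t}\big)$ evaluated on the coordinate vectors of $Q$'s free vertices, and the diagonal-parallelism condition becomes a single linear equation in those coordinates. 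Matching the two linear equations finishes the proof. A short synthetic alternative — observing that dualizing a quadrilateral is governed by where the diagonals meet, as in the proof of the edge-curvature proposition via the points $\alpha_f,\beta_f$ — could replace the coordinate computation, but the coordinate route is the most direct and least error-prone.
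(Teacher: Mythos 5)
Your proposal is correct and takes essentially the same route as the paper: the paper's proof also computes the linear-in-$t$ term of $A(P+tQ)$ from the sector formula written in edge vectors, uses the parallelity relations $[a,a^*]=[b,b^*]=\dots=0$ together with the closing condition $a+b+c+d=0$ to collapse $4A(P,Q)$ to $[a+b,\;b^*-a^*-d^*+c^*]=2[p_3-p_1,\;q_4-q_2]$, and reads off the diagonal-parallelism condition. The only (harmless) difference is bookkeeping: the factor $c$ in your equation \eqref{eq:plan-factored} turns out to be a universal constant, so no nondegeneracy hypothesis is needed at that step, and the second diagonal condition follows by the cyclic relabeling you indicate.
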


\begin{figure}[htbp]
\centering
	\setlength{\unitlength}{3947sp}
    \begin{picture}(4239,1492)(657,-1375)
	\put(657,-1375){\includegraphics[width=4250\unitlength]
		{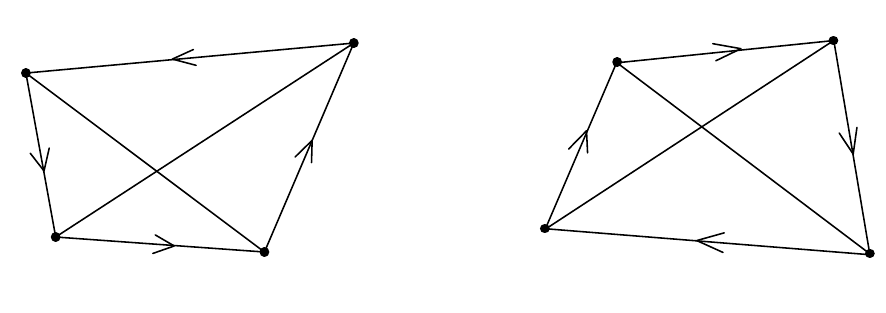}}
	\put(831,-1222){$p_1$}
	\put(2023,-1255){$p_2$}
	\put(2463,-84){$p_3$}
	\put(699,-127){$p_4$}
	\put(4881,-1311){$q_1$}
	\put(3175,-1202){$q_2$}
	\put(3513,-52){$q_3$}
	\put(4739,-30){$q_4$}
	\put(2253,-682){$b$}
	\put(1441,-61){$c$}
	\put(672,-795){$d$}
	\put(4140, -9){$c^*$}
	\put(4839,-600){$d^*$}
	\put(3998,-1271){$a^*$}
	\put(1400,-1259){$a$}
	\put(3184,-637){$b^*$}
    \end{picture}
\caption{Dual quadrilaterals.} \label{fig:dual_quads}
\end{figure}

 \begin{proof} Denote the edges of the quadrilaterals $P$ and $Q$ as in 
Figure \ref{fig:dual_quads}. For a quadrilateral $P$ with oriented edges 
$a,b,c,d$ we have
	$$
	A(P)=\frac{1}{2}([a,b]+[c+d]),
	$$
 where $[a,b]=\det(a,b)$ is the area form in the plane. The area of
the quadrilateral $P+tQ$ is given by
	$$
	A(P+tQ)=\frac{1}{2}([a+ta^*,b+tb^*]+[c+tc^*,d+td^*]).
	$$
 Identifying the linear terms in $t$ and using the identity
$a+b+c+d=0$, we get
	\begin{eqnarray*}
	4A(P,Q) & = & [a,b^*] + [a^*,b] + [c,d^*] + [c^*,d]\\
	 & = & [a+b,b^*] + [a^*,a+b] + [c+d,d^*] + [c^*,c+d]\\
	 & = & [a+b,b^*-a^*-d^*+c^*].
	\end{eqnarray*}
 Vanishing of the last expression is equivalent to the parallelism
of the non\dash corresponding diagonals, $(a+b)\parallel(b^*+c^*)$.
 \end{proof}

Theorem \ref{thm: dual=orthogonal} shows that for quadrilateral surfaces 
our definition of Koenigs nets is equivalent to the one originally 
suggested in \cite{bobenko-2007-koenigs, bobenko-2008-ddg}. For geometric 
properties of Koenigs nets we refer to these papers. It turns out that the 
class of Koenigs nets is invariant with respect to projective 
transformations.

\section{Polyhedral surfaces with constant curvature}\label{s:constant 
curvature}

Let $(m,s)$ be a polyhedral surface with its Gauss map as in Section 
\ref{sec:curvature}. We define special classes of surfaces as in classical 
surface theory, the only difference being the fact that the Gauss map is 
not determined by the surface. The treatment is similar to the approach of 
relative differential geometry.

\begin{def} \label{dfn:constant curvature surfaces} We say that a pair 
$(m,s)$ has constant mean (resp. Gaussian) curvature if the mean (resp. 
Gaussian) curvatures defined by (\ref{eq:gaussmean}) for all faces are 
equal. If the mean curvature vanishes identically, $H\equiv 0$, then the 
pair $(m,s)$ is called minimal. \end{def}

Although this definition refers to the Gauss map, the normalization of the 
length of $s$ is irrelevant, and the notion of constant curvature nets is 
well defined for discrete surfaces equipped with line congruences.

\begin{thm}\label{thm:minimal general} A pair $(m,s)$ is minimal if and 
only if $m$ is a discrete Koenigs net and $s$ is its Christoffel dual 
$s=m^*$. \end{thm}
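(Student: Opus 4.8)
The plan is to unwind the two notions appearing in the statement and observe that they coincide term by term, so that essentially no computation is required. First I would recall that, by \eqref{eq:gaussmean}, the mean curvature of a face $f$ is $H_f=-A(m(f),s(f))/A(m(f))$, and that the very hypothesis that $(m,s)$ has a well-defined mean curvature on every face forces $A(m(f))\ne 0$ throughout. Consequently $(m,s)$ is minimal, i.e.\ $H_f\equiv 0$, \emph{if and only if} the mixed area $A(m(f),s(f))$ vanishes for every face $f$.

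Next I would compare this with Definition~\ref{def:dual}: saying $s=m^*$ means precisely that $m$ and $s$ are parallel and that $A(m(f),s(f))=0$ for all faces $f$. Since $s$ is by hypothesis a Gauss image of $m$ (in the sense of Section~\ref{sec:curvature}), the meshes $m$ and $s$ are already parallel, so the only remaining condition is the vanishing of all face mixed areas — which is exactly the condition obtained in the first step. This yields $(m,s)$ minimal $\iff$ $s=m^*$. To finish, I would note that "$m$ is a Koenigs net" is by Definition~\ref{def:dual} nothing but the assertion that $m$ possesses a Christoffel dual: in the "only if" direction that dual is exhibited by $s$ itself, and in the "if" direction it is given to us. Thus the stated equivalence follows.

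The closest thing to an obstacle is a bookkeeping point rather than a genuine difficulty: one must check that the symbol $A(\cdot,\cdot)$ occurring in the curvature formula \eqref{eq:gaussmean} is literally the same bilinear form as the one in Definition~\ref{def:dual}. Both are the symmetric bilinear form polarizing the oriented-area quadratic form \eqref{eq:area} on the two-dimensional subspace $U_f$ to which the parallel polygons $m(f)$ and $s(f)$ belong, and, as observed in the proof of the area theorem in Section~\ref{sec:curvature}, this form does not depend on the choice of determinant form on $U_f$. Once that identification is made explicit, the proof is a pure restatement of definitions, and the "for all faces" quantifiers on the two sides match automatically.
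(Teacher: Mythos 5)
Your proposal is correct and is essentially the paper's own one-line proof ($H=0 \iff A(m,s)=0 \iff s=m^*$), just written out with the definitional unwinding made explicit. No further comment is needed.
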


\begin{proof} We have the equivalence $H=0 \ \iff \ A(m,s)=0 \ \iff \ 
s=m^*$. \end{proof}

\begin{figure}[t]
	\centering
	\includegraphics[width=0.7\textwidth]{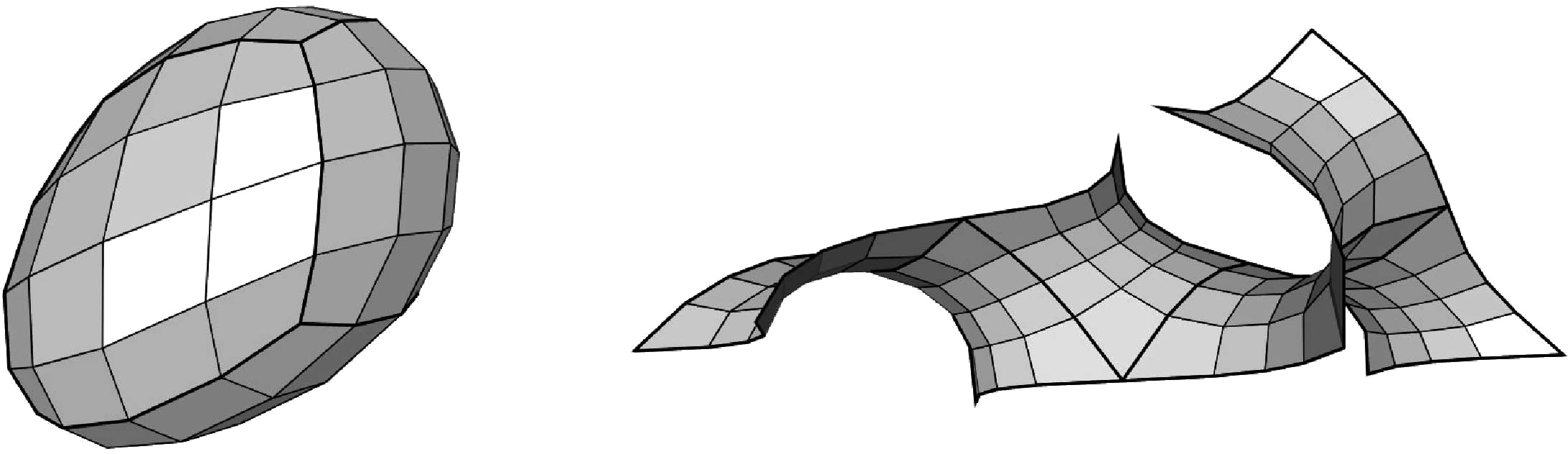}
\caption{Discrete Koenigs nets interpreted as a Gauss image $s$,
and its Christoffel dual minimal net $m=s^*$ (courtesy
P.\ Schr\"oder).}
\label{fig:minimal general}
\end{figure}

This result is analogous to the classical theorem of Christoffel 
\cite{christoffel} in the theory of smooth minimal surfaces. Figure 
\ref{fig:minimal general} presents an example of a discrete minimal 
surface $m$ constructed as the Christoffel dual of its Gauss image $s$, 
which is a discrete Koenigs net.

The statement about surfaces with nonvanishing constant mean curvature 
resembles the corresponding facts of the classical theory.

 \begin{thm}\label{thm:CMC general} A pair $(m,s)$ has constant mean 
curvature $H_0$ if and only if $m$ is a discrete Koenigs net and its 
parallel $m^{1/H_0}$ is the Christoffel dual of $m$:
	$$
	m^*=m+\frac{1}{H_0}s.
	$$
 The mean curvature of this parallel surface $(m+H_0^{-1}s,-s)$ (with the 
reversed Gauss map) is also constant and equal to $H_0$. The mid-surface 
$m+(2H_0)^{-1}s$ has constant positive Gaussian curvature $K_0=4H_0^2$ 
with respect to the same Gauss map $s$.
 \end{thm}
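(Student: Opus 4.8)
The plan is to unfold everything from bilinearity of the mixed area, the normalization $A(P,P)=A(P)$ (immediate from \eqref{eq:lincomb} with $Q=P$), and the Steiner-type formula \eqref{eq:totalarea}--\eqref{eq:gaussmean}; all three assertions then reduce to comparing coefficients of reparametrized Steiner polynomials.

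\emph{The constant-mean-curvature characterization.} First I would record, for a single face $f$, the identity
\[
A\!\bigl(m(f),\,m(f)+\tfrac{1}{H_0}s(f)\bigr)=A(m(f))+\tfrac{1}{H_0}A(m(f),s(f))=\Bigl(1-\tfrac{H_f}{H_0}\Bigr)A(m(f)),
\]
which uses bilinearity, $A(m(f),m(f))=A(m(f))$, and \eqref{eq:gaussmean}. Since $m+\tfrac{1}{H_0}s$ is a linear combination of the parallel pair $(m,s)$ it is automatically parallel to $m$, so by Definition \ref{def:dual} the vanishing of the left-hand side for \emph{every} face is exactly the statement that $m+\tfrac1{H_0}s$ is a Christoffel dual of $m$. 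Reading the displayed identity both ways (and using that $A(m(f))\neq0$ wherever $H_f$ is defined) therefore gives: $(m,s)$ has constant mean curvature $H_0$ $\iff$ $m$ is a Koenigs net and $m^*=m+\tfrac1{H_0}s$. That is the first claim.

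\emph{The parallel and mid-surfaces, treated uniformly.} For a constant $c$ and $\varepsilon\in\{\pm1\}$, the offset family of the pair $(m+cs,\,\varepsilon s)$ is $t\mapsto(m+cs)+\varepsilon t\,s=m+(c+\varepsilon t)s$, so by \eqref{eq:totalarea} for $(m,s)$,
\[
A\!\bigl((m+cs)(f)+\varepsilon t\,s(f)\bigr)=\bigl(1-2H_0(c+\varepsilon t)+K_f(c+\varepsilon t)^2\bigr)A(m(f)),
\]
a polynomial in $t$ whose value at $t=0$ is $A_0:=A\bigl((m+cs)(f)\bigr)$. Dividing by $A_0$ and matching with \eqref{eq:gaussmean} applied to $(m+cs,\varepsilon s)$, the coefficients of $t$ and of $t^2$ read off the mean and Gaussian curvatures of this pair. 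I would then substitute $A(m(f),s(f))=-H_0A(m(f))$ and $A(s(f))=K_fA(m(f))$ (which hold by the first part) and specialize: for $(c,\varepsilon)=(H_0^{-1},-1)$ one gets $A_0=(K_f/H_0^2-1)A(m(f))$ and the $t$-coefficient collapses to give mean curvature $H_0$ once more; for $(c,\varepsilon)=((2H_0)^{-1},1)$ one gets $A_0=\bigl(K_f/(4H_0^2)\bigr)A(m(f))$, so the $t^2$-coefficient over $A_0$ is $4H_0^2$, which is positive because $H_0\neq0$.

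\emph{Main obstacle.} The arithmetic is routine; the only point that needs care is nondegeneracy, since all of this presupposes that the relevant faces have nonzero area. The constant terms just computed are $A(m^*(f))=(K_f/H_0^2-1)A(m(f))$ for the parallel surface and $\bigl(K_f/(4H_0^2)\bigr)A(m(f))$ for the mid-surface, so one must exclude the faces where these vanish (equivalently, $K_f=H_0^2$, resp.\ $K_f=0$); on every face where the curvatures of the new surface are defined at all, the stated identities hold. I would also add the remark that in general \emph{only} the Gaussian curvature of the mid-surface is constant: the same comparison yields its mean curvature as $2H_0(2H_0^2-K_f)/K_f$, which genuinely varies with the face.
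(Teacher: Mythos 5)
Your proposal is correct and follows essentially the same route as the paper: the first equivalence is exactly the paper's chain $A(m,s)=-H_0A(m)\iff A(m,m+\tfrac1{H_0}s)=0$ unpacked via bilinearity of the mixed area, and the mid-surface computation reduces to the same expansion of $A(m+\tfrac{1}{2H_0}s)$. You are somewhat more thorough than the paper, which omits the explicit verification for the reversed-Gauss-map parallel surface and the nondegeneracy caveats ($K_f=H_0^2$, resp.\ $K_f=0$) that you rightly flag; your uniform reparametrization of the Steiner polynomial is also the method the paper itself uses in the subsequent linear-Weingarten theorem.
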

 \begin{proof}
 We have the equivalence
	$$
	A(m,s)=-H_0 A(m)\ \iff \
	A\Big(m,m+\frac{1}{H_0}s\Big)=0 \ \iff \
	m^*=m+\frac{1}{H_0}s.
	$$
For the Gaussian curvature of the mid-surface we get
	\begin{equation*}
	K_{\frac{1}{2H_0}}=\frac{A(s)}{A(m+\frac{1}{2H_0}s)} =
	\frac{A(s)}
	{A(m) + {1\over H_0} A(m,s) + ({1\over 2H_0})^2 A(s)} 
	= 4H_0^2.
	\end{equation*}
\end{proof}

It turns out that all surfaces parallel to a surface with constant 
curvature have remarkable curvature properties, in complete analogy to the 
classical surface theory. In particular they are linear Weingarten (For 
circular surfaces this was shown in \cite{schief-2006}).

 \begin{thm}\label{thm:Weingarten} Let $(m,s)$ be a polyhedral surface 
with constant mean curvature and its Gauss map. Consider the family of 
parallel surfaces $m^t=m+ts$. Then for any $t$ the pair $(m^t,s)$ is 
linear Weingarten, i.e., its mean and Gaussian curvatures $H_t$ and $K_t$ 
satisfy a linear relation
	\begin{equation}\label{eq:Weingarten}
	\alpha H_t+\beta K_t=1
	\end{equation}
 with constant coefficients $\alpha, \beta$.
 \end{thm}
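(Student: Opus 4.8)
The plan is to reduce everything to the behaviour of the three scalar quantities $A(m(f))$, $A(m(f),s(f))$, and $A(s(f))$ as $m$ is replaced by $m^t = m+ts$, exploiting the bilinearity recorded in \eqref{eq:lincomb}. For a fixed face $f$, write $a = A(m(f))$, $b = A(m(f),s(f))$, $c = A(s(f))$. By the hypothesis of constant mean curvature we have $b = -H_0\,a$ for every face, with a single constant $H_0$ independent of $f$. Using \eqref{eq:lincomb} with $P = m(f)$, $Q = s(f)$, the offset face $m^t(f) = m(f) + t\,s(f)$ has area $A(m^t(f)) = a + 2tb + t^2 c$, while the mixed area of $m^t(f)$ with $s(f)$ is, by bilinearity, $A(m^t(f),s(f)) = b + tc$ and of course $A(s(f)) = c$ is unchanged. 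Hence the curvatures of the pair $(m^t,s)$ are
\[
H_t = -\frac{b+tc}{a+2tb+t^2c},\qquad K_t = \frac{c}{a+2tb+t^2c}.
\]

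Next I would substitute $b = -H_0 a$ into these and look for constants $\alpha,\beta$ making $\alpha H_t + \beta K_t = 1$, i.e.\ $\alpha(-(b+tc)) + \beta c = a + 2tb + t^2 c$ as polynomials in $t$. With $b=-H_0 a$ the right-hand side is $a - 2tH_0 a + t^2 c = a(1 - H_0 t)^2 + t^2(c - H_0^2 a)$, which already suggests the split; matching the left-hand side $\alpha H_0 a + \beta c - \alpha t c$ term by term in $t$ (noting $b+tc = -H_0 a + tc$) gives a small linear system for $\alpha,\beta$ whose solution, in the generic case $c \neq H_0^2 a$, is $\alpha = (2 H_0 a)/(c - H_0^2 a)$ and $\beta = -a/(c-H_0^2 a)$ (constants, since $a$, $c$, $H_0$ do not depend on $f$ for a constant-mean-curvature surface — here one uses that a CMC surface is Koenigs with $m^* = m + H_0^{-1}s$, so the ratio $c/a = K_0$ is likewise constant by Theorem~\ref{thm:CMC general}, giving $c = 4H_0^2 a$ and hence $c - H_0^2 a = 3H_0^2 a$, independent of $f$). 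One then verifies directly that $\alpha H_t + \beta K_t$ equals $\alpha(-(b+tc)) + \beta c$ divided by $a+2tb+t^2c$, and that numerator equals denominator by the choice of $\alpha,\beta$.

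The only genuine subtlety, and the step I expect to need the most care, is the degenerate possibility $c = H_0^2 a$ (equivalently $K_0 = H_0^2$, i.e.\ the surface is "umbilic-like" with $\kappa_1 = \kappa_2 = H_0$): then $a+2tb+t^2c = a(1-H_0t)^2$ and $b+tc = a H_0(H_0 t - 1) \cdot(-1)$... one should check whether the linear relation still holds with a suitable choice, and it does — in fact $K_t = c/(a(1-H_0 t)^2)$ and $H_t = H_0/(1-H_0 t)$, so $K_t = H_t^2$ and the relation $\alpha H_t + \beta K_t = 1$ holds with, e.g., $\beta = 0$ only in a limiting sense; more honestly, since $H_0^2 = K_0$ the pair $(m,s)$ is already covered because $K_t = H_t^2$ is itself the Weingarten relation after clearing denominators, or one treats it by continuity/limit from the generic case. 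I would state the generic computation as the main argument and remark on the umbilic case separately. Since the whole argument is face-by-face with $\alpha,\beta$ manifestly independent of $f$ (depending only on $H_0$ and the constant $K_0$), linear Weingarten-ness of $(m^t,s)$ follows for every $t$ for which the areas $A(m^t(f))$ are nonzero, which is all but finitely many $t$ per face.
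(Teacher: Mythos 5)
There is a genuine gap, and it sits exactly at the point the theorem is really about. Your formulas for $H_t$ and $K_t$ are correct and agree with the paper's, but your choice of $\alpha,\beta$ involves $a=A(m(f))$ and $c=A(s(f))$, and you justify their face-independence by claiming that Theorem~\ref{thm:CMC general} gives $c/a = K_0 = 4H_0^2$ for all faces. That theorem says no such thing: it asserts constant Gaussian curvature $4H_0^2$ for the \emph{mid-surface} $m+(2H_0)^{-1}s$, not for $m$. For a constant mean curvature pair $(m,s)$ only $H$ is face-independent; $K=c/a$ genuinely varies from face to face (the paper's own proof flags this explicitly: ``$H$ is independent of the face, whereas $K$ is varying''). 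With $c/a$ varying, your $\alpha = 2H_0a/(c-H_0^2a)$ and $\beta=-a/(c-H_0^2a)$ are not constants, so the linear Weingarten relation is not established. The entire ``umbilic case'' $c=H_0^2a$ is a side effect of the same misreading.

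There is also a structural misstep in how you set up the matching. You try to solve $\alpha(-(b+tc))+\beta c = a+2tb+t^2c$ as an identity of polynomials in $t$; but $\alpha,\beta$ are permitted to depend on $t$ (the claim is that for each fixed $t$ the pair $(m^t,s)$ is linear Weingarten), and in fact your system is inconsistent as stated — the $t^2$ coefficient forces $c=0$. The correct matching is, for fixed $t$, in the face-dependent quantities: after substituting $b=-H_0a$, the requirement $\alpha H_0 a - \alpha t c + \beta c = a(1-2H_0t)+t^2c$ must hold for all faces, i.e.\ for all admissible pairs $(a,c)$. Equating the coefficients of $a$ and of $c$ gives $\alpha H_0 = 1-2H_0t$ and $\beta-\alpha t = t^2$, hence $\alpha = 1/H_0 - 2t$ and $\beta = t/H_0 - t^2$, which is the paper's conclusion. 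The essential idea you are missing is that the theorem is precisely the statement that the face-dependence of $K$ drops out of a single linear relation — not that $K$ is constant to begin with.
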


 \begin{proof} Denote by $H$ and $K$ the curvatures of the basic surface 
$(m,s)$ with constant mean curvature. Let us compute the curvatures $H_t$ 
and $K_t$ of the parallel surface $(m+ts,s)$. We have
	\begin{align*}
	& \dfrac{A(m + (t+\delta)s )}{A(m + ts)}
	=\dfrac{1 - 2 H(t+\delta) + K(t+\delta)^2}{1 - 2Ht + Kt^2} 
	\\
	&=1 - 2 \delta \dfrac{H-Kt}{1-2Ht+Kt^2}+\delta^2
	\dfrac{K}{1-2Ht+Kt^2}= 1-2H_t\delta+K_t\delta^2.
	\end{align*}
 The last identity treats $m+(t+\delta)s$ as a parallel surface of
$m+ts$. Thus,
	$$
	H_t = \frac{H - Kt}{1 - 2Ht + Kt^2}, 
	\quad 
	K_t = \frac{K}{1 - 2Ht + Kt^2}.
	$$
 Note that $H$ is independent of the face, whereas $K$ is varying.
Therefore, with the above values for $H_t$ and $K_t$, relation
(\ref{eq:Weingarten}) is equivalent to
	$
	\frac{\alpha H}{1 - 2Ht} = \frac{\beta - \alpha t}{t^2} = 1,
	$
 which implies
	$$
	\alpha = \frac{1}{H} - 2t, \quad \beta = \frac{t}{H} - t^2.
	$$
\end{proof}

We see that any discrete Koenigs net $m$ can be extended to a minimal or 
to a constant mean curvature net by an appropriate choice of the Gauss map 
$s$. Indeed,
	\begin{itemize}
 \item[] $(m,s)$ is minimal for $s=m^*$;
 \item[] $(m,s)$ has constant mean curvature for $s=m^*-m$.
	\end{itemize}
 However, $s$ defined in such generality can lead us too far away from the 
smooth theory. It is natural to look for additional requirements which 
bring it closer to the Gauss map of a surface. These are exactly three 
cases of special Gauss images of Section \ref{sec:2}.

\subsection*{Cases with canonical Gauss image}

For a polyhedral surface $m$ which has a face offset $m'$ at distance 
$d>0$ (i.e., $m$ is a conical mesh) the Gauss image $s=(m'-m)/d$ is 
uniquely defined even without knowledge of $m'$, provided consistent 
orientation is possible. This is because $s$ is tangentially circumscribed 
to $S^2$ and there is only one way we can parallel translate the faces of 
$m$ such that they are in oriented contact with $S^2$. The same is true if 
$m$ has an edge offset, because an $n$\dash tuple of edges emanating from 
a vertex ($n\ge 3$) can be parallel translated in only one way so as to 
touch $S^2$.

It follows that for both cases a canonical Gauss image and canonical 
curvatures are defined. In case of an edge offset much more is known about 
the geometry of $s$. E.g.\ we can express the edge length of $s$ in terms 
of data read off from $m$ (see Figure \ref{fig:koebe}). The edges 
emanating from a vertex $s_i$ are contained in $s_i$'s tangent cone, which 
has some opening angle $\omega_j$. By parallelity of edges we can 
determine $\omega_j$ from the mesh $m$ alone. The ratio between edge 
length in the mesh and edge length in the Gauss image determines the 
curvature: $\kappa_{i,j} = \pm (\cot\omega_i+\cot\omega_j) /
 \|m_i-m_j\|$ (we skip discussion of the sign).

  \begin{figure}[t]
\centering
 \begin{overpic}[width=.30\textwidth]{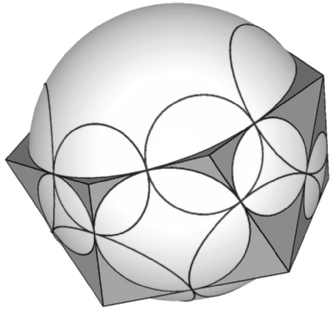}
    \lput(59,20){$c_i$}
    \put(50,71){$c_j$}
    \put(92,10){$s_i$}
 \end{overpic}
 \caption{A Koebe polyhedron $s$. The tangent cone from each vertex $s_i$ 
touches $S^2$ along a circle $c_i$. These circles form a packing, touching 
each other in the points where the edges touch $S^2$. It follows that the 
edge lengths are related to the opening angles $\omega_i$ of said cones: 
We have $\|s_i-s_j\|=\cot\omega_i+\cot\omega_j$.}
 \label{fig:koebe}
\end{figure}

\section{Curvature of principal contact element nets. Circular minimal and 
cmc surfaces}

\noindent In this section we are dealing with the case when the Gauss 
image $s$ lies in the two-sphere $S^2$, i.e., is of unit length, 
$\|s\|=1$. Our main example is the case of quadrilateral surfaces with 
regular combinatorics, called {\em Q-nets}. In this case a polyhedral 
surface $m$ with its parallel Gauss map $s$ is described by a map
	$$
	(m,s):\mathbb{Z}^2\to\mathbb{R}^3\times {S}^2.
	$$
 It can be canonically identified with a {\em contact element} net
	$$
	(m,{\mathcal P}):\mathbb{Z}^2\to\{ {\rm contact\ elements\ in}\
	\mathbb{R}^3\},
	$$
 where $\mathcal P$ is the oriented plane orthogonal $s$. We will call the 
pair $(m,s)$ also a contact element net. Recall that according to 
\cite{bobenko-2007-org} a contact element net is called {\em principal} if 
neighboring contact elements $(m,{\mathcal P})$ share a common touching 
sphere. This condition is equivalent to the existence of focal points for 
all elementary edges $(n,n')$ of the lattice $\mathbb{Z}^2\ni n,n'$, which 
are solutions to
	$$
	(m+ts)(n)=(m+ts)(n')
	$$
 for some $t$.

 \begin{thm}\label{thm:circular=contact element} Let 
$m:\mathbb{Z}^2\to\mathbb{R}^3$ be a Q-net with a parallel unit Gauss map 
$s:\mathbb{Z}^2\to S^2$. Then $m$ is circular, and $(m,s)$ is a principal 
contact element net. Conversely, for a principal contact element net 
$(m,s)$, the net $m$ is circular and $s$ is a parallel Gauss map of $m$.
 \end{thm}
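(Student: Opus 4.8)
The plan is to establish the equivalence of three local conditions on an elementary quad (and on the edge-stars of the lattice): (i) $(m,s)$ is a principal contact element net, i.e.\ neighboring contact elements share a common touching sphere; (ii) for every edge $(n,n')$ of $\mathbb Z^2$ there is a focal point, a value of $t$ with $(m+ts)(n)=(m+ts)(n')$; (iii) the four vertices $m(f)$ of each elementary quad are concyclic. Conditions (i) and (ii) are already identified as equivalent in the text preceding the statement, so the real content is the equivalence of (ii) with circularity of $m$, together with the verification that a unit parallel Gauss map always satisfies these conditions, and that conversely the Gauss map supplied by a principal contact element net is parallel to $m$.

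First I would unwind the focal point condition on a single edge. If $s$ has unit length, then $m_i$ and $m_j=m_{i'}$ together with the equation $m_i+ts_i=m_j+ts_j$ say that the point $p=m_i+ts_i=m_j+ts_j$ is equidistant (at distance $|t|$) from $m_i$ and $m_j$ along the two normal lines; but more usefully, since $\|s_i\|=\|s_j\|=1$, one checks that $\|p-m_i\|=\|p-m_j\|$, so $p$ lies on the perpendicular bisector plane of the segment $m_im_j$. The existence of such a $t$ is equivalent to the two normal lines $L_i=m_i\vee(m_i+s_i)$ and $L_j$ meeting — which is exactly the line-congruence/co-planarity property noted in Section \ref{sec:2} for parallel meshes. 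Thus for a parallel unit Gauss map every edge automatically has a focal point (the intersection $L_i\cap L_j$, which is the edge's center of curvature from the Remark in Section \ref{sec:curvature}), giving condition (ii), hence (i), for free. The nontrivial direction is then: an edge-wise focal point structure along the two lattice directions forces the four points of each quad onto a circle. Here I would argue that the two focal points of the two pairs of opposite edges of a quad, being intersections of normal lines, together with the perpendicular-bisector characterization, force all four vertices to be equidistant from a common point in the face plane $U_f$ — the circumcenter — so $m(f)$ is circular; concyclicity of every elementary quad is precisely the definition of a circular Q-net.

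For the converse I would start from a principal contact element net $(m,s)$: by hypothesis $m$ is a Q-net (planar quads) and consecutive contact elements share a touching sphere, equivalently (ii) holds. One must show $s$ is edge-wise parallel to $m$, i.e.\ $s_j-s_i\parallel m_i-m_j$ for each edge. This is where the unit-length normalization enters again: from $m_i+ts_i=m_j+ts_j$ with $\|s_i\|=\|s_j\|=1$ one gets $t(s_i-s_j)=m_j-m_i$, so $s_i-s_j$ is literally a scalar multiple of $m_j-m_i$ — parallelity is immediate, with the scalar being $-1/t=-\kappa_e$ the edge curvature. Circularity of $m$ then follows from the first half (or directly, since a Q-net with a focal point on every edge has concyclic quads by the perpendicular-bisector argument).

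The main obstacle I expect is the circularity argument in the forward direction: carefully showing that the per-edge focal condition, applied to all four edges of a quad lying in a common plane $U_f$, pins the four vertices to one circle rather than merely constraining each opposite pair. The clean way is to note that a focal point on edge $(i,j)$ forces the intersection point of the two unit-normal lines to be equidistant from $m_i$ and $m_j$; intersecting the normal lines of a quad appropriately (using that the congruence lines over the four vertices of a face are pairwise co-planar in the parallel-mesh setting) produces a single point equidistant from all four vertices. One should also be slightly careful with degenerate faces and with the sign/branch of $t$, but these are the routine caveats the paper elsewhere chooses to suppress. Everything else — the identification of (i) with (ii), and the parallelity computation — is a one-line consequence of $\|s\|\equiv1$.
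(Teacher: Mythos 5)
Your handling of principality (adjacent normal lines $L_i$, $L_j$ are coplanar because the side face $m_i,m_j,m_j+s_j,m_i+s_i$ is a planar trapezoid, hence they meet in the focal point) and of the converse (from $m_i+ts_i=m_j+ts_j$ one reads off $t(s_i-s_j)=m_j-m_i$, i.e.\ parallelity, after which circularity follows from the forward part) matches the paper, and the converse is in fact spelled out more explicitly than in the paper's own proof. The genuine gap is the forward circularity step. From the focal point of an edge $(i,j)$ you correctly obtain a point equidistant from $m_i$ and $m_j$, but these are four \emph{distinct} points, one per edge, each lying on the perpendicular bisector of its own edge; their existence does not force the four perpendicular bisectors to be concurrent, which is exactly what concyclicity of $m(f)$ means. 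Your proposed repair --- that intersecting the normal lines of a face ``appropriately'' yields a single point equidistant from all four vertices --- fails: only \emph{adjacent} pairs of the lines $L_i=m_i+\R s_i$ are coplanar (the diagonal pairs $L_0,L_2$ and $L_1,L_3$ are coplanar only when corresponding diagonals of $m(f)$ and $s(f)$ are parallel), and concurrency of all four normals occurs precisely in the umbilic case where $s(f)$ is similar to $m(f)$. So in the generic situation there is no such common point, and circularity is not established by this route.

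The paper proves circularity by a different and essentially unavoidable argument, which your proposal never invokes: since $s$ is edgewise parallel to the Q-net $m$, each face $s(f)$ is a planar quadrilateral; being a planar quadrilateral with vertices on $S^2$, it is inscribed in a circle (a planar section of the sphere); and a quadrilateral whose edges are parallel to those of a circular quadrilateral is itself circular (the interior angles, taken mod $\pi$, depend only on the edge directions). Note that this is the one place where the hypothesis $s(\Z^2)\subset S^2$ enters globally over a face; in your argument the sphere is used only through the local identity $\|s_i\|=\|s_j\|$ at the two ends of an edge, and that information is insufficient. Replacing your circularity paragraph by this two-line argument makes the rest of the proposal correct and essentially identical to the paper's proof.
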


 \begin{proof} The circularity of $m$ follows from the simple fact that 
any quadrilateral with edges parallel to the edges of a circular 
quadrilateral is also circular. Consider an elementary cube built by two 
parallel quadrilaterals of the nets $m$ and $m+s$. All the side faces of 
this cube are trapezoids, which implies that the contact element net 
$(m,s)$ is principal.
 \end{proof}

\begin{figure}[h]
	\centering
	\setlength{\unitlength}{4144sp}
	\begin{picture}(3715,1609)(364,-990)
	\put(364,-990){\includegraphics[width=3740\unitlength]
		{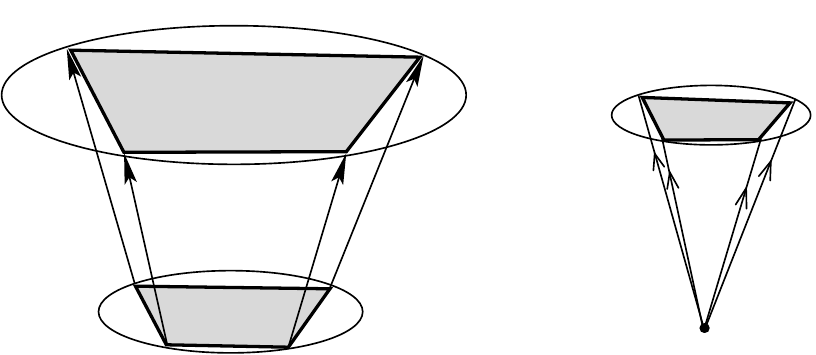}}
	\put(774,-655){$f$}
	\put(1306,-871){$A(f)$}
	\put(1306,119){$A(f_t)$}
	\put(507,496){$f+tn$}
	\put(3514,299){$A(n)$}
	\put(3835,-477){$n$}
	\end{picture}
 \caption{Parallel Q-nets $m$ and $m+s$ with the unit Gauss map $s$. All 
the nets are circular. The pair $(m,s)$ constitutes a principal contact 
element net.} \label{fig:circular=contact element}
 \end{figure}

The mean and the Gauss curvatures of the principal contact element nets 
$(m,s)$ are defined by formulas (\ref{eq:gaussmean}).

Proposition \ref{prop:quads} obviously implies: 

\begin{cor} For a circular quad mesh $m$, principal curvatures exist 
w.r.t.\ any Gauss image $s$ inscribed in $S^2$. \end{cor}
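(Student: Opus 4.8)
The plan is to derive the corollary directly from Proposition \ref{prop:quads}(3) by checking its sole hypothesis for the case at hand. Proposition \ref{prop:quads}(3) asserts that if $f$ is a quadrilateral and the Gauss image $s$ is inscribed in a strictly convex surface $\Sigma$, then principal curvatures exist (and coincide exactly when $m(f)$ and $s(f)$ are similar). So the first step is simply to observe that the unit sphere $S^2$ is a strictly convex surface; hence a Gauss image $s$ inscribed in $S^2$ is, a fortiori, inscribed in a strictly convex surface. With that identification, the hypothesis of Proposition \ref{prop:quads}(3) is satisfied for every face $f$ of $m$, provided the mean and Gaussian curvatures are defined there.

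The second point to nail down is that $H_f$ and $K_f$ are indeed defined for every quadrilateral face $f$, i.e.\ $A(m(f))\neq 0$. Since $m$ is assumed to be a (nondegenerate) circular quad mesh, each face $m(f)$ is inscribed in an honest circle and is therefore a genuine quadrilateral of nonzero area; so curvatures are well defined everywhere. (If one wishes to admit degenerate faces, one restricts the statement to faces of nonvanishing area, exactly as in the standing convention recalled just after the definition of $K_f$, $H_f$.) Once $A(m(f))\neq 0$, Proposition \ref{prop:quads}(3) applies verbatim and yields the existence of $\kappa_{1,f}$, $\kappa_{2,f}$ for that face.

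There is really no obstacle here: the corollary is a specialization, and the only thing to verify is the trivial fact that $S^2$ is strictly convex, together with the equally trivial observation that circularity guarantees nondegenerate faces. One could, if desired, spell out the mechanism underlying Proposition \ref{prop:quads}(3) in this concrete setting — namely that the four vertices $s_0,\dots,s_3$ of $s(f)$ lie on a circle (a planar section of $S^2$), hence on the boundary of their convex hull, so that part 1 of Proposition \ref{prop:quads} forces the polynomial $x^2-2H_fx+K_f$ to factor over $\R$ — but this merely re-traces the proof of Proposition \ref{prop:quads} and adds nothing. Accordingly, the proof is the one-line invocation ``Proposition \ref{prop:quads} obviously implies'' already placed before the statement, and the natural write-up is simply to cite part 3 of that proposition.

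\begin{proof}
Since $S^2$ is a strictly convex surface, a Gauss image $s$ inscribed in $S^2$ is inscribed in a strictly convex surface, and the claim is the special case $\Sigma=S^2$ of Proposition \ref{prop:quads}, part 3. (Each face of the circular quad mesh $m$ has nonzero area, so its mean and Gaussian curvatures are defined and the proposition applies.)
\end{proof}
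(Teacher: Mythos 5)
Your proposal is correct and matches the paper exactly: the corollary is stated as an immediate consequence of Proposition \ref{prop:quads}, part 3, with $\Sigma=S^2$, and the paper offers no further argument. Your added remarks (nonvanishing face area so that $H_f$, $K_f$ are defined, and the underlying mechanism via part 1) are consistent with the paper's conventions and proofs.
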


Recall also that circular Koenigs nets are identified in 
\cite{bobenko-2007-koenigs, bobenko-2008-ddg} as the discrete isothermic 
surfaces defined originally in \cite{bobenko-1996} as circular nets with 
factorizable cross-ratios.

Both minimal and constant mean curvature principal contact element nets 
are defined as in Section~\ref{s:constant curvature}. It is remarkable 
that the classes of circular minimal and cmc surfaces which are obtained 
via our definition of mean curvature turn out to be equivalent to the 
corresponding classes originally defined as special isothermic surfaces 
characterized by their Christoffel transformations. Since circular Koenigs 
nets are isothermic nets, from Theorem \ref{thm:minimal general} we 
recover the original definition of discrete minimal surfaces from 
\cite{bobenko-1996}.

\begin{cor} A principal contact element net 
$(m,s):\mathbb{Z}^2\to\mathbb{R}^3\times S^2$ is minimal if and only if 
the net $s:\mathbb{Z}^2\to S^2$ is isothermic and $m=s^*$ is its 
Christoffel dual. \end{cor}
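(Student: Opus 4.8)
The plan is to recognize this corollary as an immediate combination of results already established in the excerpt, so that the "proof" is essentially a short chain of equivalences with one small geometric observation to close the loop. First I would invoke Theorem~\ref{thm:circular=contact element}: a principal contact element net $(m,s)$ is precisely a Q-net $m:\mathbb{Z}^2\to\mathbb{R}^3$ together with a parallel unit Gauss map $s:\mathbb{Z}^2\to S^2$, and in that situation $m$ (and hence also $s$, being parallel to $m$) is automatically circular. So the phrase "principal contact element net" can be replaced throughout by "parallel pair $(m,s)$ of circular nets with $s\subset S^2$," and the notions of mean curvature and minimality for $(m,s)$ are exactly those of Section~\ref{s:constant curvature}.

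Next I would apply Theorem~\ref{thm:minimal general}: the pair $(m,s)$ is minimal if and only if $m$ is a discrete Koenigs net and $s=m^*$ is its Christoffel dual. Here I would use the symmetry of Christoffel duality (noted right after Definition~\ref{def:dual}): $m=s^*$ is equivalent to $s=m^*$, and likewise $m$ is a Koenigs net iff $s$ is. Thus minimality of $(m,s)$ is equivalent to: $s$ is a discrete Koenigs net (lying in $S^2$) and $m=s^*$. The remaining point is purely terminological: the excerpt recalls, just before the corollary, that circular Koenigs nets are exactly the discrete isothermic surfaces of \cite{bobenko-1996}. Since $s$ is circular (being parallel to the circular net $m$) and lies in $S^2$, "$s$ is a Koenigs net" is the same as "$s$ is isothermic." Combining these three inputs gives exactly the statement: $(m,s)$ is minimal $\iff$ $s$ is isothermic and $m=s^*$.

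I do not anticipate a genuine obstacle here, since every ingredient has already been proved; the only care needed is bookkeeping: one must make sure that the circularity of $s$ is available (it follows from $s$ being parallel to the circular net $m$, by the same "quadrilateral with parallel edges" argument used in the proof of Theorem~\ref{thm:circular=contact element}), so that the identification of circular Koenigs nets with isothermic nets applies to $s$ and not merely to $m$. If one wanted to be scrupulous one could also remark that the length normalization $\|s\|=1$ plays no role in the Koenigs/Christoffel conditions — this is exactly the comment made after Definition~\ref{dfn:constant curvature surfaces} — so passing between the "contact element" language and the "parallel Gauss map in $S^2$" language is harmless. The write-up can therefore be a single short paragraph: unwind the definition of minimality via Theorem~\ref{thm:minimal general}, use symmetry of duality, and substitute "circular Koenigs $=$ isothermic."

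\begin{proof}
By Theorem~\ref{thm:circular=contact element}, a principal contact element net $(m,s)\colon\mathbb{Z}^2\to\mathbb{R}^3\times S^2$ is the same thing as a Q-net $m$ together with a parallel unit Gauss map $s$, and then both $m$ and $s$ are circular (any quadrilateral with edges parallel to those of a circular one is itself circular). The normalization $\|s\|=1$ is irrelevant for the Koenigs and Christoffel conditions, so minimality of $(m,s)$ is that of Section~\ref{s:constant curvature}. By Theorem~\ref{thm:minimal general}, $(m,s)$ is minimal if and only if $m$ is a discrete Koenigs net and $s=m^*$. Since Christoffel duality is symmetric, this is equivalent to $s$ being a discrete Koenigs net with $m=s^*$. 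Finally, $s$ is a circular net in $S^2$, and circular Koenigs nets are precisely the discrete isothermic nets of \cite{bobenko-1996}; hence $s$ is a Koenigs net if and only if $s$ is isothermic. Combining these equivalences, $(m,s)$ is minimal if and only if $s$ is isothermic and $m=s^*$.
\end{proof}
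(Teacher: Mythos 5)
Your proposal is correct and follows exactly the route the paper intends: the corollary is stated there without proof as an immediate consequence of Theorem~\ref{thm:minimal general} combined with the identification of circular Koenigs nets with discrete isothermic nets, which is precisely the chain of equivalences (plus the symmetry of Christoffel duality) that you spell out. Your extra bookkeeping about the circularity of $s$ and the irrelevance of the normalization $\|s\|=1$ is sound and only makes explicit what the paper leaves implicit.
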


Similarly, Theorem \ref{thm:CMC general} in the circular case implies that 
the discrete surfaces with constant mean curvature of 
\cite{Hertrich-Jeromin_Hoffmann_Pinkall_1999, BP99} fit into our 
framework.

\begin{cor}\label{cor:CMC principal contact element nets}
 A principal contact element net $(m,s):\mathbb{Z}^2\to\mathbb{R}^3\times 
S^2$ has constant mean curvature $H_0\neq 0$ if and only if the circular 
net $m$ is isothermic and there exists its dual discrete isothermic 
surface $m^*:\mathbb{Z}^2\to\mathbb{R}^3$ at constant distance 
$|m-m^*|=\frac{1}{H_0}$. The unit Gauss map $s$ which determines the 
principal contact element net $(m,s)$ is given by
	\begin{equation}\label{eq:n CMC}
	s=H_0(m^*-m).
	\end{equation}
 The principal contact element net of the parallel surface 
$(m+\frac{1}{H_0},-s)$ also has constant mean curvature $H_0$. The 
mid-surface $(m+\frac{1}{2H_0},s)$ has constant Gaussian curvature 
$4H_0^2$.
 \end{cor}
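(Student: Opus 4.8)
The plan is to reduce Corollary~\ref{cor:CMC principal contact element nets} to the already-proved Theorem~\ref{thm:CMC general} by specializing the Gauss map to the unit sphere case. Recall that Theorem~\ref{thm:circular=contact element} tells us that a principal contact element net $(m,s):\mathbb Z^2\to\mathbb R^3\times S^2$ is precisely a circular Q-net $m$ together with a parallel unit Gauss map $s$; so the curvature functions $H_f,K_f$ of $(m,s)$ are well defined by \eqref{eq:gaussmean}, and existence of principal curvatures is guaranteed by the Corollary following Theorem~\ref{thm:circular=contact element}. Thus ``$(m,s)$ has constant mean curvature $H_0$'' is literally the condition of Definition~\ref{dfn:constant curvature surfaces} applied in this circular setting, and Theorem~\ref{thm:CMC general} applies verbatim.

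First I would invoke Theorem~\ref{thm:CMC general}: the pair $(m,s)$ has constant mean curvature $H_0\neq0$ if and only if $m$ is a discrete Koenigs net and $m^*=m+\tfrac{1}{H_0}s$ is its Christoffel dual. Since $m$ here is circular, I would then use the identification (recalled just before the corollary, from \cite{bobenko-2007-koenigs, bobenko-2008-ddg}) that a circular Koenigs net is the same thing as a discrete isothermic surface. Hence ``$m$ is a Koenigs net'' becomes ``$m$ is isothermic,'' and $m^*$ is its discrete-isothermic Christoffel dual. From $m^*=m+\tfrac1{H_0}s$ and $\|s\|=1$ we read off both $|m-m^*|=\tfrac1{H_0}$ and, solving for $s$, the formula $s=H_0(m^*-m)$, which is \eqref{eq:n CMC}. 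Conversely, given an isothermic circular net $m$ with Christoffel dual $m^*$ at constant distance $1/H_0$, the mesh $s:=H_0(m^*-m)$ is parallel to $m$ (Christoffel duals are parallel) and has $\|s\|=H_0|m-m^*|=1$, so $(m,s)$ is a principal contact element net by Theorem~\ref{thm:circular=contact element}, and $m+\tfrac1{H_0}s=m^*$ is the Christoffel dual, so Theorem~\ref{thm:CMC general} gives constant mean curvature $H_0$.

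For the last two sentences I would again quote the corresponding assertions of Theorem~\ref{thm:CMC general}: the parallel surface $(m+H_0^{-1}s,-s)$ has constant mean curvature $H_0$, and the mid-surface $m+(2H_0)^{-1}s$ has constant Gaussian curvature $4H_0^2$ with respect to $s$. The only thing to add is that $-s$ and $s$ are again unit, so both parallel surfaces are themselves principal contact element nets (by Theorem~\ref{thm:circular=contact element} together with circularity of $m+ts$, which is preserved under edge-parallel offsets as noted in the proof of that theorem). This upgrades the statements of Theorem~\ref{thm:CMC general} from ``pair with Gauss map'' to ``principal contact element net.''

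The proof has essentially no computational content — it is a translation exercise — so there is no real obstacle; the one point requiring care is to make sure the dictionary ``circular Koenigs net $=$ discrete isothermic surface'' and the normalization $\|s\|=1$ are used consistently in both directions, i.e.\ that the Christoffel dual produced by Theorem~\ref{thm:CMC general} is the \emph{same} map as the classical discrete-isothermic Christoffel transform. This is exactly the content of Theorem~\ref{thm: dual=orthogonal} together with the cited identification, so it may simply be quoted.
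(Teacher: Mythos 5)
Your proposal is correct and follows essentially the same route as the paper: the ``only if'' direction is read off from Theorem~\ref{thm:CMC general} together with the identification of circular Koenigs nets with discrete isothermic surfaces, and the ``if'' direction consists of checking that $s=H_0(m^*-m)$ is a parallel unit Gauss map (so that $(m,s)$ is a principal contact element net by the argument of Theorem~\ref{thm:circular=contact element}) and that its mean curvature equals $H_0$. The only cosmetic difference is that the paper carries out the final mean curvature computation $-A(m,s)/A(m)=H_0$ explicitly rather than citing Theorem~\ref{thm:CMC general} for it.
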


\begin{proof} Only the ``if'' part of the claim may require some 
additional consideration. If the discrete isothermic surfaces $m$ and 
$m^*$ are at constant distance $1/H_0$, then the map $s$ defined by 
(\ref{eq:n CMC}) maps into $S^2$ and is thus circular. Again, as in the 
proof of Theorem~\ref{thm:circular=contact element}, this implies that the 
contact element net $(m,s)$ is principal. Its mean curvature is given by 
	\begin{equation*} 
	-\frac{A(m,s)}{A(m,m)}=-\frac{A(m,H_0(m^*-m))}{A(m,m)}=H_0. 
	\end{equation*} 
 \end{proof}

\subsection{Minimal s-isothermic surfaces}

We now turn our attention to the discrete minimal surfaces $m$ of 
\cite{bobenko-2006-ms}, which arise by a Christoffel duality from a 
polyhedron $s$ which is midscribed to a sphere (a {\em Koebe polyhedron}). 
As Koebe polyhedra are up to M\"obius transformations determined by their 
combinatorics, a passage to the limit allows us to determine in this way 
the shape of smooth minimal surface from the combinatorics of the Gauss 
image of its network of principal curvature lines.

The Christoffel duality construction of \cite{bobenko-2006-ms} is applied 
to each face of $s$ separately. We consider a polygon $P=(p_0,\dots, 
p_{n-1})$ with $n$ even and incircle of radius $\rho$. We introduce the 
points $q_i$ where the edge $p_{i-1}p_i$ touches the incircle and identify 
the plane of $P$ with the complex numbers. In the notation of Figure 
\ref{fig:complex} the passage to the dual polygon $P^*$ is effected by 
changing the vectors $a_i=q_{2i}-z$, $b_i=q_{2i+1}-z$, $a'_i = 
p_{2i}-q_{2i+1}$, $b'_i=p_i-q_i$. Apart from multiplication with the 
factor $\pm\rho^2$, the corresponding vectors which define $P^*$ are given 
by
    \begin{equation}
    \label{eq:cr}
    a_j^*= (-1)^j /\Bar{a_j},\quad
    b_j^*=- (-1)^j /\Bar{b_j},\quad
    a_j^{\prime *}= (-1)^j/\Bar {a_j'},\quad
    b_j^{\prime *}=- (-1)^j /\Bar {b_j'}.
    \end{equation}
 The sign in the factor $\pm\rho^2$ depends on a certain labeling of 
vertices. The consistency of this construction and the passage to a 
branched covering in the case of odd $n$ is discussed in 
\cite{bobenko-2006-ms}. For us it is important that both $P$ and $P^*$ 
occur as concatenation of quadrilaterals:
    \begin{equation}
    P_j=(p_{j-1}q_j p_j q_j)\ \mbox{for}\
    j=0,\dots,n-1 \implies P=P_1\oplus\ldots\oplus P_{n-1},
    \end{equation}
 and the same for the starred (dual) entities. The main result
is the following:

\begin{figure}[t]
    \medskip
\centerline{\begin{overpic}[height=.35\textwidth]{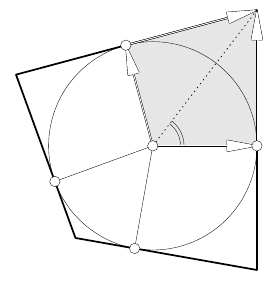}
    \put(70,51){$a_0$}
    \put(95,60){$b'_0$}
    \lput(50,49){$z$}
    \put(53,60){$b_0$}
    \lput(70,94){$a'_0$}
    \put(95,48){$q_0$}
    \cput(46,90){$q_1$}
    \cput(70,62){$P_0$}
    \cput(33,56){$P_1$}
    \cput(35,28){$P_2$}
    \cput(68,24){$P_3$}
    \cput(93,101){$p_0$}
    \cput(5,81){$p_1$}
    \lput(22,10){$p_2$}
    \put(94,0){$p_3$}
 \end{overpic}\qquad\qquad
 \begin{overpic}[height=.35\textwidth]{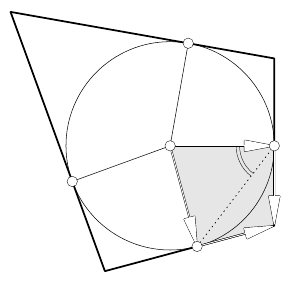}
    \cput(75,51){$1/\Bar{a_0}$}
    \put(99,50){$q_0^*$}
    \put(99,30){$-1/\Bar{b'_0}$} 
    \put(77,7){$1/\Bar{a'_0}$}
    \lput(62,30){$-1/\Bar{b_0}$}
    \lput(59,50){$z^*$}
    \put(98,12){$p_0^*$}
    \lput(33,0){$p_1^*$}
    \put(65,5){$q_1^*$}
    \put(10,100){$p_2^*$}
    \cput(95,84){$p_3^*$}
    \cput(77,62){$P_3^*$}
    \cput(40,60){$P_2^*$}
    \cput(44,20){$P_1^*$}
    \cput(75,30){$P_0^*$}
 \end{overpic}}
    \caption{Christoffel duality construction for
$s$\dash isothermic surfaces applied to a quadrilateral $P$ with incircle.
Corresponding sub\dash quadrilaterals $P_j$, $P_j^*$ have vanishing
mixed area.}
    \label{fig:complex}
\end{figure}

    \begin{thm} 
 A discrete s\dash isothermic minimal surface $m$ according to
\cite{bobenko-2006-ms} (Christoffel dual of a Koebe polyhedron $s$) has
vanishing mean curvature. Every face $f$ has principal curvatures
$\kappa_{1, f},\kappa_{2, f}=-\kappa_{1, f}$.
    \end{thm}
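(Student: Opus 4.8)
The plan is to reduce the statement for the whole surface to the statement for a single face, and then to reduce the single-face statement to the quadrilaterals $P_j$ making up that face. Recall from the text that each face $P$ of the Koebe polyhedron $s$ and its Christoffel dual face $P^*$ both arise as concatenations $P=P_1\oplus\cdots\oplus P_{n-1}$, $P^*=P_1^*\oplus\cdots\oplus P_{n-1}^*$, with the sub-quadrilaterals $P_j,P_j^*$ parallel (the caption of Figure~\ref{fig:complex} already asserts that corresponding $P_j,P_j^*$ have vanishing mixed area). So the first step is to invoke Lemma~\ref{lem:concat}: the mixed area $A(P,P^*)$ of the full face is the sum $\sum_j A(P_j,P_j^*)$. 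If each summand vanishes, then $A(P,P^*)=0$ for every face, which by Definition~\ref{def:dual} says precisely that $s^*=m$ is the Christoffel dual of $s$; Theorem~\ref{thm:minimal general} then gives $H\equiv 0$ immediately.

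The core computational step is therefore to verify $A(P_j,P_j^*)=0$ for the little quadrilateral $P_j=(p_{j-1}\,q_j\,p_j\,q_j)$ and its dual. Here I would use the complex-number description of \eqref{eq:cr}. The quadrilateral $P_j$ has edges given (up to sign and the common factor $\pm\rho^2$) by the vectors $a_i,b_i,a_i',b_i'$ attached to the incircle, and the dual edges are the ``inverted'' vectors $(-1)^j/\bar a_j$ etc. The key algebraic fact, which is the engine behind Christoffel duality in the circular/isothermic setting, is that if two parallel quadrilaterals have edge vectors $v_1,v_2,v_3,v_4$ and $\bar v_1^{-1},\,\bar v_2^{-1},\,\bar v_3^{-1},\,\bar v_4^{-1}$ (with appropriate alternating signs), then their mixed area vanishes: writing the oriented area in the plane as $\mathrm{Im}$ of suitable products, the linear-in-$t$ term of $A(v + t\bar v^{-1})$ collapses because $\mathrm{Im}(v\cdot\overline{(\bar v^{-1})}) = \mathrm{Im}(|v|^2 \bar v^{-1}\cdot v^{-1}\cdots)$— more precisely one checks that $\det(v, \bar v^{-1}) = 0$ and more generally that the cross-terms cancel in pairs because of the alternating sign pattern. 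Concretely, I would identify $U\cong\mathbb{C}$, write $\det(u,w) = \mathrm{Im}(\bar u w)$, expand $4A(P_j,P_j^*)$ via Leibniz' formula \eqref{eq:area} applied to the linear term of \eqref{eq:lincomb}, substitute the four inversion formulas \eqref{eq:cr}, and observe that the resulting sum of four imaginary parts telescopes to zero — this is exactly the ``$A(P,Q)=0$'' criterion of Theorem~\ref{thm: dual=orthogonal} in disguise, since the inversion construction is designed to make non-corresponding diagonals parallel.

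Having established $H\equiv 0$, the claim about principal curvatures is then almost immediate: $H_f=0$ for every face means the principal-curvature polynomial $x^2-2H_f x+K_f = x^2+K_f$, whose roots are $\pm\sqrt{-K_f}$, hence of the form $\kappa_{1,f}$ and $-\kappa_{1,f}$ whenever they are real. To see they are real it suffices that $K_f\le 0$, equivalently $A(s(f))$ and $A(m(f))$ have opposite signs (or one vanishes); alternatively one can apply Proposition~\ref{prop:quads}, noting that a face of a Koebe polyhedron has its vertices on the boundary of their convex hull (the incircle construction forces convexity), so principal curvatures exist, and being symmetric about $H_f=0$ they are negatives of each other.

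The main obstacle I expect is the sign bookkeeping in the second step: the factor $\pm\rho^2$ in \eqref{eq:cr} varies from quadrilateral to quadrilateral with the ``certain labeling of vertices'' mentioned in the text, and one must be careful that the alternating $(-1)^j$ signs are exactly what makes the four cross-terms in $4A(P_j,P_j^*)$ cancel rather than add. I would organize this by checking it once for $P_0$ in the explicit coordinates of Figure~\ref{fig:complex} (where the incircle is the unit circle and $z=0$), and then arguing that the $j\mapsto j+1$ shift only flips an overall sign in both $P_j$ and $P_j^*$, which does not affect the vanishing of the bilinear mixed area. Everything else — the concatenation additivity, the passage from faces to the whole surface, and the deduction about principal curvatures — is routine given the results already available in the paper.
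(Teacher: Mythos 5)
Your proposal follows the paper's proof in all essentials: decompose each face $P$ of the Koebe polyhedron and its dual $P^*$ into the sub-quadrilaterals $P_j,P_j^*$, show $A(P_j,P_j^*)=0$ for each $j$, sum via Lemma~\ref{lem:concat} to get $A(P,P^*)=0$ and hence $H\equiv0$ by Theorem~\ref{thm:minimal general}, and invoke Proposition~\ref{prop:quads} (convexity of the faces of $s$) for the existence of the principal curvatures, which are then $\pm\sqrt{-K_f}$. The only real divergence is in how the key sub-step $A(P_j,P_j^*)=0$ is verified: you propose the direct complex-arithmetic expansion using the inversion formulas \eqref{eq:cr} (with the sign bookkeeping you rightly flag), whereas the paper either cites \cite{bobenko-2006-ms} or gives the elementary geometric argument that the angle $\sphericalangle(q_i,z,p_i)$ reappears in the isosceles triangle $q_i^*z^*q_{i+1}^*$, so non-corresponding diagonals are parallel and Theorem~\ref{thm: dual=orthogonal} applies --- which is precisely the ``in disguise'' observation you make yourself.
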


    \begin{proof} We first show that for all $j$, $A(P_j,P_j^*)=0$. This 
can be derived from \cite{bobenko-2006-ms} where it is shown that $P_j$ 
and $P_j^*$ are dual quads in the sense of discrete isothermic surfaces 
\cite{bobenko-1996}. Discrete isothermic surfaces are circular Koenigs 
nets \cite{bobenko-2008-ddg}, i.e., the quadrilaterals $P_j$ and $P_j^*$ 
are Christoffel dual in the sense of Definition \ref{def:dual}.

We can see this also in an elementary way which for $\rho=1$ is 
illustrated by Figure \ref{fig:complex}: The angle 
$\alpha_i=\sphericalangle(q_i, z, p_i)$ occurs also in the isosceles 
triangle $q_i^* z^* q_{i+1}^*$, so non\dash corresponding diagonals in 
$P_i,P_i^*$ are parallel. By Theorem \ref{thm: dual=orthogonal}, 
$A(P_i,P_i^*)=0$.

Lemma \ref{lem:concat} now implies that $A(P,P^*)=\sum A(P_j,P_j^*)=0$. 
Thus all faces of $m$ (i.e., the $P^*$'s of the previous discussion) have 
vanishing mixed area with respect to $s$. As the faces of $s$ are strictly 
convex, Prop.\ \ref{prop:quads} shows that principal curvatures exist.
 \end{proof}

\subsection{Discrete surfaces of rotational symmetry}

It is not difficult to impose the condition of constant mean or Gaussian 
curvature on discrete surfaces with rotational symmetry. In the following 
we briefly discuss this interesting class of examples.

We first consider quadrilateral meshes with regular grid combinatorics 
generated by iteratively applying a rotation about the $z$ axis to a {\em 
meridian polygon} contained in the $xz$ plane. Such surfaces have e.g.\ 
been considered by \cite{konopelchenko-1999-ts}.

The vertices of the meridian polygon are assumed to have coordinates 
$(r_i,0, h_i)$, where $i$ is the running index. The Gauss image of this 
polyhedral surface shall be generated in the same way, from the polygon 
with vertices $(r_i^*,0, h_i^*)$. Note that parallelity implies
	 \begin{equation}
	 {r_{i+1}-r_i\over
	h_{i+1}-h_i}={r_{i+1}^*-r_i^*\over h_{i+1}^*-h_i^*}.
	\label{eq:parallelity}
	 \end{equation}
 Figure \ref{fig:examples} illustrates such surfaces. All faces being 
trapezoids, it is elementary to compute mean and Gaussian curvatures 
$H^{(i)}$, $K^{(i)}$ of the faces bounded by the $i$-th and $(i+1)$-st 
parallel. It turns out that the angle of rotation is irrelevant for the 
curvatures:
    \begin{equation}
    H^{(i)} = {r_ir_i^*-r_{i+1}r_{i+1}^*\over
    r_{i+1}^2-r_i^2} , \quad K^{(i)} = {r_{i+1}^{*2}-r_i^{*2}\over
    r_{i+1}^2-r_i^2}. \label{eq:rot:HK} \end{equation}
 The principal curvatures associated with these faces have the values
    \begin{equation}
    \kappa_1^{(i)} = {r_{i+1}^*+r_i^* \over {r_{i+1}+r_i}}, \quad
    \kappa_2^{(i)} = {r_{i+1}^*-r_i^* \over {r_{i+1}-r_i}}.
    \label{eq:rot:princ}
    \end{equation}
 The interesting fact about these formulae is that the coordinates $h_i$ 
do not occur in them. Any functional relation involving the curvatures, 
and especially a constant value of any of the curvatures, leads to a 
difference equation for $(r_i)_{i\in\Z}$. For example, given an arbitrary 
Gauss image $(r_i^*,0, h_i^*)$ and the mean curvature function $H^{(i)}$ 
defined on the faces (which are canonically associated with the edges of 
the meridian curve) the values $r_i$ of the surface are determined by the 
difference equation (\ref{eq:rot:HK}) an an initial value $r_0$. Further 
the values $h_i$ follow from the parallelity condition 
(\ref{eq:parallelity}).

A meridian curve of a smooth surface of revolution does not intersect the 
rotation axis, and the Gauss map is spherical. Discrete analogues of such 
surfaces with a Gauss map $s\in S^2$ and prescribed curvature are 
determined by the values $(h^*_i)_{i\in\Z}$ lying in the interval 
$(-1,1)$, and an initial value $r_0$. The values 
$r^*_i=(1-{h^*_i}^2)^{1/2}$ should be chosen positive.

\begin{figure}[t]
	\centering
    \begin{overpic}[height=.21\linewidth]{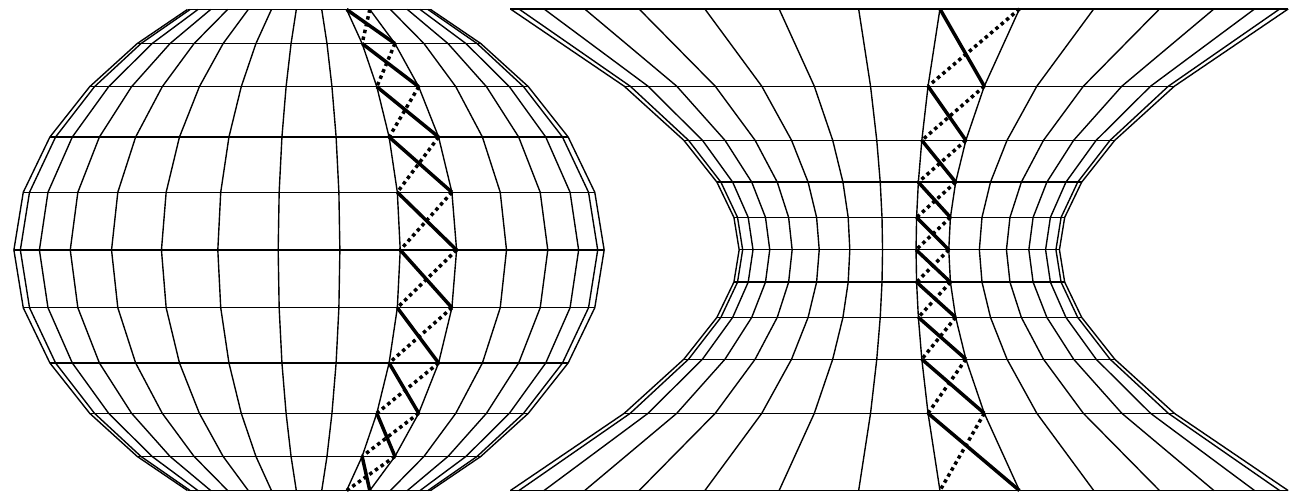}
        \lput(5,5){$s$}
        \put(85,17){$m$}
    \end{overpic}\hfill
    \begin{overpic}[height=.33\linewidth]{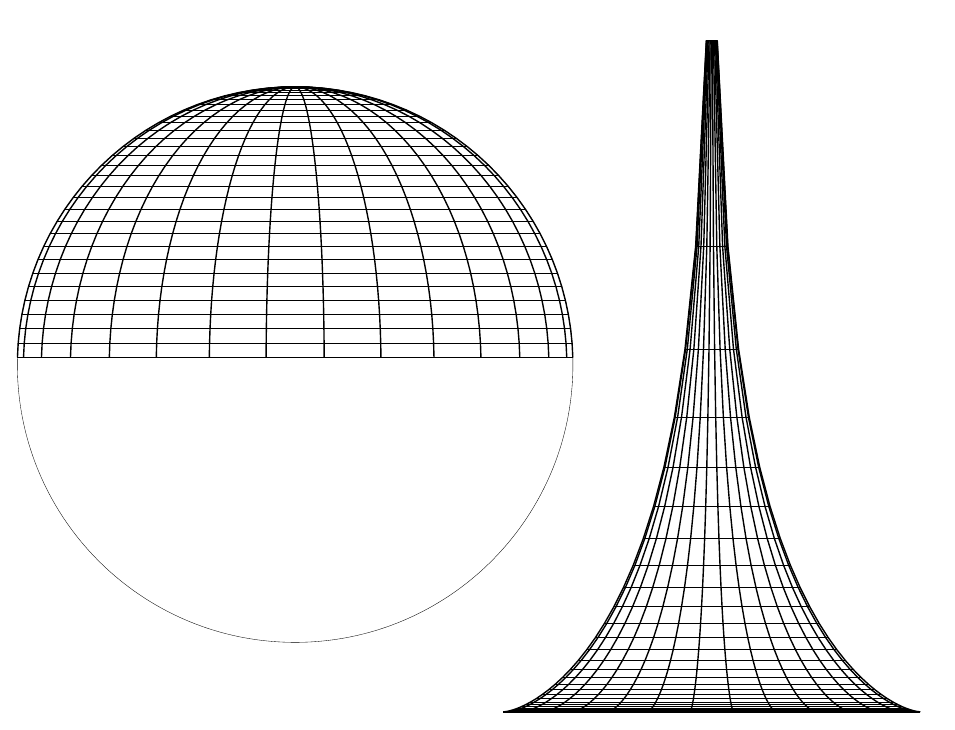}
       \lput(8,57){$s'$}
       \put(85,20){$m'$}
    \end{overpic}
    \caption{Left: A polyhedral surface $m$ which is a minimal surface 
w.r.t.\ to the Gaussian image $s$.  Right: A polyhedral surface $m'$ of 
constant Gaussian curvature w.r.t.\ the Gauss image $s'$ (discrete 
pseudosphere).}
    \label{fig:examples}
\end{figure}

 \begin{remark} The generation of a surface $m$ and its Gauss image $s$ by 
applying $k$-th powers of the same rotation to a meridian polygon 
(assuming axes of $m$ and $s$ are aligned) is a special case of applying a 
sequence of affine mappings, each of which leaves the axis fixed. It is 
easy to see that Equations \eqref{eq:rot:HK} and \eqref{eq:rot:princ} are 
true also in this more general case.
 \end{remark}

\begin{remark} While the formula for $\kappa_2$ given by 
\eqref{eq:rot:princ} is the usual definition of curvature for a planar 
curve, the formula for $\kappa_1$ can be interpreted as Meusnier's 
theorem. This is seen as follows: The curvature of the $i$-th parallel 
circle is given some average value of $1/r$ (in this case, the harmonic 
mean of $1/r_i$ and $1/r_{r+1}$). The sine of the angle $\alpha$ enclosed 
by the parallel's plane and the face under consideration is given by an 
average value of $r^*$ (this time, an arithmetic mean). By Meusnier, the 
normal curvature ``$\sin\alpha\cdot {1\over r}$'' of the parallel equals 
the principal curvature $\kappa_1$, in accordance with 
\eqref{eq:rot:princ}. \end{remark}

\begin{example} The mean curvature of faces given by \eqref{eq:rot:HK} 
vanishes if and only if $r_{i+1}:r_i = r_i^*:r_{i+1}^*$. This condition is 
converted into the first order difference equation
    \begin{equation}
    \Delta\ln r_i = - \Delta \ln r^*_i \quad (i\in\Z),
    \label{eq:catenoid}
    \end{equation}
 where $\Delta$ is the forward difference operator. It is not difficult to 
see that the corresponding differential equation $(\ln r)'=-(\ln r^*)'$ is 
fulfilled by the catenoid: With the meridian $(t,\cosh t)$ and the unit 
normal vector $(-\tanh t, 1/\cosh t)$ we have $r(t)=\cosh t$ and 
$r^*(t)=1/\cosh t$. We therefore like to denote discrete surfaces 
fulfilling \eqref{eq:catenoid} discrete catenoids (see Figure 
\ref{fig:examples}, left).
 \end{example}

 \begin{example} A discrete surface of constant Gaussian curvature $K$ 
obeys the difference equation $K \Delta (r_i^2) = \Delta (r_i^{*2})$. 
Figure \ref{fig:examples}, right illustrates a solutions.
 \end{example}

\subsection{Discrete surfaces of rotational symmetry with constant mean 
curvature and elliptic billiards}

There exists a nice geometric construction of discrete surfaces of 
rotational symmetry with constant mean curvature, which we obtained 
jointly with Tim Hoffmann. This is a discrete version of the classical 
Delaunay rolling ellipse construction for surfaces of revolution with 
constant mean curvature (Delaunay surfaces).

Play an extrinsic billiard around an ellipse $E$. A trajectory is a 
polygonal curve $P_1,P_2,\ldots$ such that the intervals $[P_i,P_{i+1}]$ 
touch the ellipse $E$ and consecutive triples of vertices 
$P_{i-1},P_i,P_{i+1}$ are not collinear (see Figure \ref{fig:billiard}). 
Let us connect the vertices $P_i$ to the focal point $B$, and roll the 
trajectory $P_1,P_2,\ldots$ to a straight line $\ell$, mapping the 
triangles $BP_iP_{i+1}$ of Figure \ref{fig:billiard} isometrically to the 
triangles $B_iP_iP_{i+1}$ of Figure \ref{fig:rolling}. We use the same 
notations for the vertices of the billiard trajectory and their images on 
the straight line, and the points $B_i$ are chosen in the same half-plane 
of $\ell$. Thus we have constructed a polygonal curve $B_1,B_2,\ldots$ . 
Applying the same construction to the second focal point $A$ we obtain 
another polygonal curve $A_1,A_2,\ldots$, chosen to lie in another 
half-plane of $\ell$.

\begin{figure}[t]
	\setlength{\unitlength}{3552sp}
\begin{picture}(3429,4336)(5244,-4520)
	\put(5244,-4520){\includegraphics[width=3860\unitlength]
		{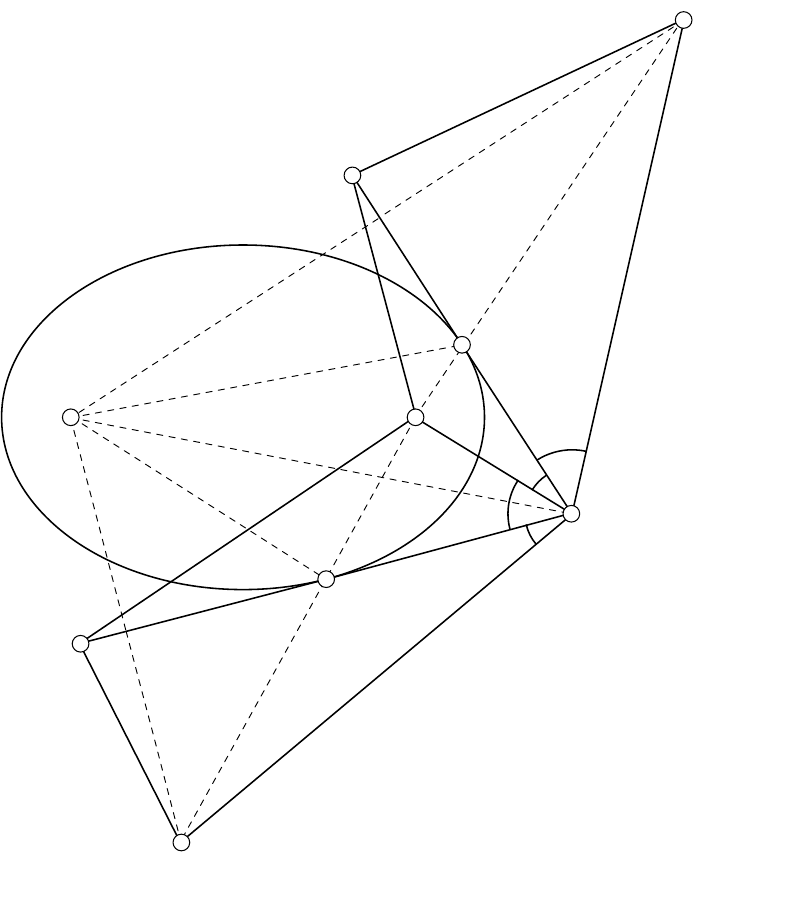}}
	\put(8658,-319){$A_2$}
	\put(6701,-946){$P_3$}
	\put(8074,-2773){$P_2$}
	\put(7880,-2255){$\gamma$}
	\put(7630,-2377){$\beta$}
	\put(7465,-2703){$\gamma$}
	\put(7529,-2916){$\beta$}
	\put(5800,-4300){$A_1$}
	\put(5474,-2083){$A$}
	\put(6978,-2187){$B$}
	\put(5363,-3326){$P_1$}
\end{picture}
	\vskip-1em
	\caption{An external elliptic billiard. The trajectory
	$\{P_i\}_{i\in\Z}$ is tangent to an ellipse.}
    \label{fig:billiard}
	\setlength{\unitlength}{3552sp}
\begin{picture}(4682,2845)(1087,-5494)
	\put(1087,-5494){\includegraphics[width=5060\unitlength]
		{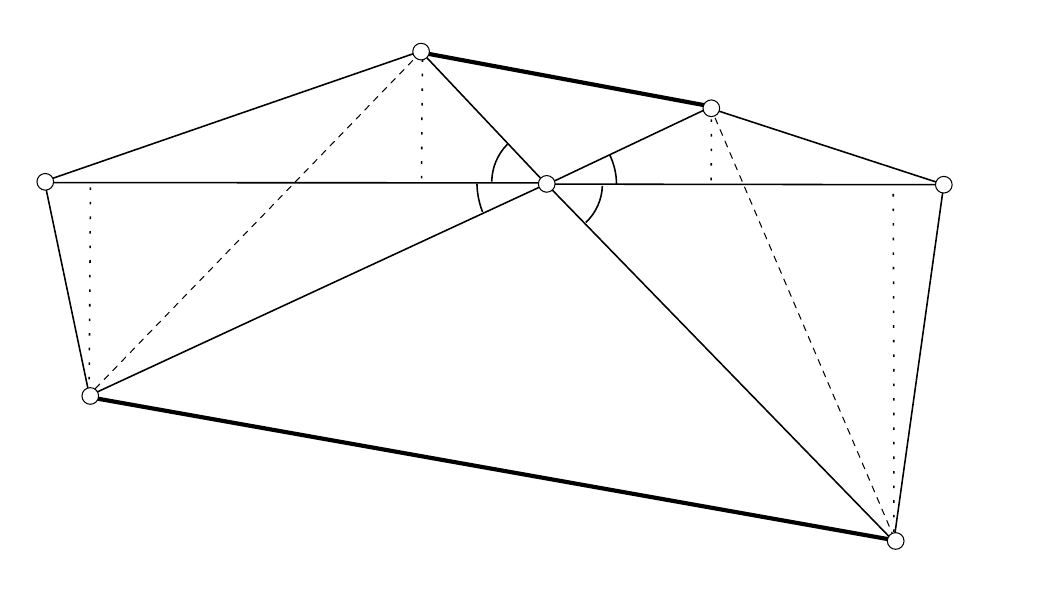}}
	\put(4526,-3070){$B_2$}
	\put(3103,-2784){$B_1$}
	\put(5754,-3575){$P_3$}
	\put(3335,-3438){$\gamma$}
	\put(1592,-3942){$r_1'$}
	\cput(2958,-3364){$r_1$}
	\cput(1102,-3561){$P_1$}
	\put(1376,-4800){$A_1$}
	\put(3212,-3691){$\beta$}
	\put(3628,-3763){$P_2$}
	\put(4022,-3741){$\gamma$}
	\put(3826,-3412){$\beta$}
	\put(5520,-5200){$A_2$}
	\cput(4343,-3425){$r_2$}
	\cput(5204,-3994){$r_2'$}
\end{picture}%
	\vskip-1em
\caption{A discrete cmc surface with rotational
 symmetry generated from an elliptic billiard.}
    \label{fig:rolling}
\end{figure}

Let us consider discrete surfaces $m$ and $\wt m$ with rotational symmetry 
axis $\ell$ generated by the meridian polygons constructed above: 
$m_i=B_i$, $\wt m_i=A_i$. They are circular surfaces which one can provide 
with the same Gauss map $s_i:=m_i-\wt m_i$.

 \begin{thm} \label{thm:cmc_rolling}
 Let $P_1,P_2,\ldots$ be a trajectory of an extrinsic elliptic billiard 
with the focal points $A,B$. Let $m,\wt m$ be the circular surfaces with 
rotational symmetry generated by the discrete rolling ellipse construction 
in Figures \ref{fig:billiard}, \ref{fig:rolling}: $m_i=B_i$, $\wt 
m_i=A_i$. Both surfaces $(m,s)$ and $(\wt m,-s)$ with the Gauss map 
$s=m-\wt m$ have constant mean curvature $H$, where $1/H=|A_1B|$ equals 
twice the major axis of the ellipse (see Figure~\ref{fig:billiard}).
 \end{thm}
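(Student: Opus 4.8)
The plan is to reduce the whole assertion to the one–dimensional data of the meridian polygons and then invoke the rotational curvature formula \eqref{eq:rot:HK}, which expresses $H^{(i)}$ through the radial coordinates $r_i$ of the meridian of $m$ and $r_i^{*}$ of the meridian of the Gauss image alone, the angle of rotation being irrelevant. Three classical facts about the ellipse $E$ carry everything: the sum of the two focal radii of any point of $E$ equals the major-axis length, say $2a$; the normal of $E$ at a point bisects the angle subtended there by the foci $A$, $B$ (the reflection, or optical, property); and the product of the distances from $A$ and from $B$ to any tangent line of $E$ equals $b^{2}$, the square of the semi-minor axis. Let $T_i\in E$ be the point in which the edge $P_iP_{i+1}$ touches $E$. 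Rolling the triangle $BP_iP_{i+1}$ onto $\ell$ is a Euclidean motion carrying the tangent of $E$ at $T_i$ onto $\ell$ and $T_i$ to a point of $\ell$; being an isometry it gives $|B_iT_i|=|BT_i|$ and makes the distance $r_i$ of $B_i$ from $\ell$ equal to the distance of $B$ from the tangent of $E$ at $T_i$, and symmetrically the distance $\wt r_i$ of $A_i$ from $\ell$ equal to the distance of $A$ from that same tangent.

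Two things are then harvested. By the reflection property the rays $T_iB$ and $T_iA$ are mirror images in the normal of $E$ at $T_i$; after rolling, and because $A_i$ is placed in the half-plane of $\ell$ opposite to the one containing $B_i$, the rays $T_iB_i$ and $T_iA_i$ point in opposite directions, so $A_i$, $T_i$, $B_i$ are collinear with $T_i$ between them. Hence $\|s_i\|=|A_iB_i|=|A_iT_i|+|T_iB_i|=|AT_i|+|BT_i|=2a$ for every $i$, and this common value equals $|A_1B|$, the reflection of a focus in a tangent of $E$ lying on the circle of radius $2a$ about the other focus. Secondly, the product-of-distances property yields the key identity $r_i\wt r_i=b^{2}$. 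One must also see that $s=m-\wt m$ is an admissible common Gauss image, i.e. that $m\parallel\wt m$: consecutive rolled triangles $BP_{i-1}P_i$ and $BP_iP_{i+1}$ share the side ending at $P_i$, so $B_{i-1}$ and $B_i$ lie on the circle about $P_i$ of radius $|BP_i|$, and $A_{i-1}$, $A_i$ on the circle about $P_i$ of radius $|AP_i|$; since a chord of a circle is perpendicular to the bisector of its central angle, and since the isogonal property of $E$ at the billiard vertex $P_i$ (the rays $P_iA$, $P_iB$ are isogonal with respect to the two tangents of $E$ through $P_i$) says, after rolling, that $\angle B_{i-1}P_iB_i$ and $\angle A_{i-1}P_iA_i$ have a common bisector, the chords $B_{i-1}B_i$ and $A_{i-1}A_i$ are parallel. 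So the meridians, hence $m$, $\wt m$ and $s$, are pairwise parallel; and, the faces of $m$ being planar isosceles trapezoids, $m$ is circular.

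Now the curvature. Since $A_i$ lies on the side of $\ell$ opposite to $B_i$, the meridian of $s=m-\wt m$ has radial coordinate $r_i^{*}=r_i-(-\wt r_i)=r_i+\wt r_i$, so $r_ir_i^{*}=r_i^{2}+r_i\wt r_i=r_i^{2}+b^{2}$ and hence $r_ir_i^{*}-r_{i+1}r_{i+1}^{*}=r_i^{2}-r_{i+1}^{2}=-(r_{i+1}^{2}-r_i^{2})$; substituting this into \eqref{eq:rot:HK} gives $H^{(i)}=-1$ for every face (on a face with $r_i=r_{i+1}$, where \eqref{eq:rot:HK} is indeterminate, the identity forces $r_i^{*}=r_{i+1}^{*}$ as well and $H^{(i)}=-1$ is read off \eqref{eq:gaussmean} directly). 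Thus $m$ has constant mean curvature with respect to $s$; since $\|s_i\|\equiv 2a$, passing to the unit Gauss image $s/\|s\|$ rescales the mean curvature by $1/(2a)$, so $(m,s)$ has constant mean curvature $H$ with $1/|H|=2a=|A_1B|$. Interchanging $A$ and $B$ replaces the pair $(r_i,r_i^{*})$ by $(\wt r_i,-r_i^{*})$, which leaves \eqref{eq:rot:HK} unchanged, so $(\wt m,-s)$ has the same constant mean curvature; and the assertion on the midsurface is then immediate from Theorem \ref{thm:CMC general}.

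The step I expect to be the genuine obstacle is the parallelity of the two rolled meridian polygons — the one place the optical property of $E$ is used beyond the bare reflection property, resting on the elementary observation about chords and central-angle bisectors of concentric circles. Everything else is bookkeeping with the rolling isometry together with the three standard ellipse facts; the only care needed is with orientation conventions (which half-plane each $A_i$, $B_i$ is put in, and the sign in \eqref{eq:rot:HK}), and these do not affect the constancy of the curvature.
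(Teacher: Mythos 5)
Your proof is correct, and its skeleton coincides with the paper's: both reduce everything to the rotational curvature formula \eqref{eq:rot:HK}, both get the constancy of $\|s_i\|=|A_iB_i|=2a$ from the focal-sum property (you spell out the collinearity of $A_i$, $T_i$, $B_i$ after rolling, which the paper leaves implicit), and both derive the parallelity of the meridians from the isogonality of the focal rays at a billiard vertex --- your ``common bisector of $\angle B_{i-1}P_iB_i$ and $\angle A_{i-1}P_iA_i$, with both chords perpendicular to it'' is the same configuration the paper phrases as ``$P_2=(A_1B_2)\cap(B_1A_2)$ and $\triangle P_2A_1A_2\sim\triangle P_2B_2B_1$''. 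The one genuinely different ingredient is how you obtain the constancy of the product of radii: the paper reads $r_i/r_{i+1}=r'_{i+1}/r'_i$ off the similar isosceles triangles it has already built for the parallelity step, whereas you invoke the pedal property of the ellipse (the product of the distances from the two foci to a tangent line equals $b^2$) plus the observation that rolling turns these distances into the distances of $B_i$, $A_i$ from the axis. This yields the explicit value $r_i\wt r_i=b^2$ and decouples the identity from the rolled picture, at the price of importing one more classical fact about conics; the paper's version stays self-contained in the rolled configuration. Two minor points: your $H=-1/(2a)$ versus the paper's $+1/l$ is purely a matter of the sign convention for the radial coordinate of the Gauss image meridian (with $s=m-\wt m$ and $A_i$ in the opposite half-plane it is $r_i+\wt r_i$, as you take it; the paper's displayed computation uses $r'_i-r_i$), and this does not affect constancy or $1/|H|=|A_1B|$; and your aside on faces with $r_i=r_{i+1}$, which the paper silently skips, is slightly too quick --- there $H$ also involves the ratio of the axial edge lengths of the two meridians, which one must check equals $\wt r_i/r_i$ (it does, by the same similarity you used for parallelity) --- but this is still more care than the paper takes.
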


 \begin{proof} The sum of the distances from a point of an ellipse to the 
focal points is independent of the point, i.e.,
	$$
	l:=| A_iB_i|
	$$
 is independent of $i$. Due to the equal angle lemma of Figure 
\ref{fig:equal_angles} we have equal angles $\beta:=\angle 
P_1P_2A_1=\angle BP_2P_3$ and $\gamma:=\angle P_1P_2B=\angle P_3P_2A_2$ in 
Figure \ref{fig:billiard}. Thus $P_2$ in Figure \ref{fig:rolling} is the 
intersection point of the straight lines $(A_1B_2)\cap (B_1A_2)$. Similar 
triangles $\triangle P_2A_1A_2\sim \triangle P_2B_2B_1$ imply parallel 
edges $(A_1A_2)\parallel (B_1B_2)$. This yields the proportionality 
$r_i/r_{i+1}=r'_{i+1}/r'_i$ for the distances $r$ to the axis $\ell$. For 
the mean curvature of the surface $m$ with the Gauss image $s=m-\wt m$ we 
obtain from (\ref{eq:rot:HK}):
	$$
	H=\frac{1}{l}
	\frac{r_i(r'_i-r_i)-r_{i+1}(r'_{i+1}-r_{i+1})}
		{r_{i+1}^2-r_i^2}
	=\frac{1}{l}.
	$$
 The surface $\wt m$ is the parallel cmc surface of Corollary \ref{cor:CMC 
principal contact element nets}.
 \end{proof}

 \begin{figure}
 \includegraphics{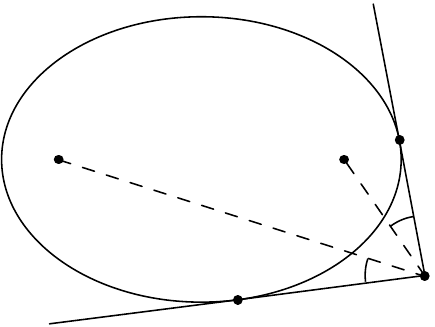}
 \caption{The angles between the tangent directions and the
directions to the focal points of an ellipse are equal.}
    \label{fig:equal_angles}
 \end{figure}

If the vertices of the trajectory $P_1,P_2,\ldots$ lie on an
ellipse $E'$ confocal with $E$, then it is a classical reflection
billiard in the ellipse $E'$ (see for example
\cite{tabachnikov-2005}). The sum
	$$
	d:=| AP_i|+| BP_i|
	$$
 is independent of $i$. The quadrilaterals $A_iA_{i+1}B_{i+1}B_i$ in 
Figure \ref{fig:rolling} have equal diagonals, i.e., are trapezoids. The 
product of the lengths of their parallel edges is independent of $i$:
	\begin{equation}\label{eq:d_l}
	|A_iA_{i+1}||B_iB_{i+1}|=d^2-l^2.
	\end{equation}
 As we have shown in the proof of Theorem \ref{thm:cmc_rolling}, $r_ir'_i$ 
is another product independent of $i$. An elementary computation gives the 
same result for the cross-ratios of a faces of the discrete surfaces $m$ 
and $\wt m$:
	$$
	q=-\frac{1}{\sin^2
	\alpha}\frac{|A_iA_{i+1}||B_iB_{i+1}|}{r_ir'_i},
	$$
 where $2\alpha$ is the rotation symmetry angle of the surface. We see 
that $q$ is the same for all faces of the surfaces $m$ and $\wt m$.

We have derived the main result of \cite{hoffmann-1998}.

 \begin{cor}
 Let $P_1,P_2,\ldots$ be a trajectory of a classical reflection elliptic 
billiard, and $m,\wt m$ be the discrete surfaces with rotational symmetry 
generated by the discrete rolling ellipse construction as in Theorem 
\ref{thm:cmc_rolling}. Both these surfaces have constant mean curvature 
and constant cross-ratio of their faces.
 \end{cor}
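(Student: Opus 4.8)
The plan is to deduce the corollary from Theorem~\ref{thm:cmc_rolling} together with the two length identities established in the paragraph immediately preceding the statement; essentially all of the geometric content is already in place, so the task is only to assemble it for the two claims, namely constant mean curvature and constant cross-ratio of the faces.

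For the mean curvature I would note that a trajectory of a classical reflection billiard whose caustic is an ellipse $E$ is, by definition, tangent to $E$ along every one of its segments, with consecutive vertex triples non-collinear; this is exactly the hypothesis of Theorem~\ref{thm:cmc_rolling} for that $E$. Hence both $(m,s)$ and $(\wt m,-s)$ with $s=m-\wt m$ have constant mean curvature $H=1/l$, where $l=|A_iB_i|$ is shown to be $i$-independent in the proof of that theorem. For the cross-ratio I would use the explicit formula $q=-\sin^{-2}\alpha\,|A_iA_{i+1}|\,|B_iB_{i+1}|/(r_ir'_i)$ for the complex cross-ratio of a face, the face of $m$ between the $i$-th and $(i+1)$-st parallels being the isosceles trapezoid spanned by the meridian edge $B_iB_{i+1}$ and its image under the rotation by the symmetry angle $2\alpha$ (and likewise for $\wt m$, with $A_iA_{i+1}$, by the symmetry $A\leftrightarrow B$). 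It then suffices to check that every factor on the right-hand side is independent of $i$: the angle $\alpha$ (half the rotation symmetry angle) is fixed; the product $r_ir'_i$ is constant because the proof of Theorem~\ref{thm:cmc_rolling} gives $r_i/r_{i+1}=r'_{i+1}/r'_i$; and $|A_iA_{i+1}|\,|B_iB_{i+1}|=d^2-l^2$ is constant by Equation~\eqref{eq:d_l}, whose proof in turn uses that the $P_i$ lie on the confocal ellipse $E'$ (so that $d=|AP_i|+|BP_i|$ is constant), that $l$ is constant, and that $A_iA_{i+1}B_{i+1}B_i$ is an isosceles trapezoid with legs $A_iB_i$, $A_{i+1}B_{i+1}$ of length $l$ and diagonals of length $d$.

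In the usual sense there is almost no obstacle here: the constant-mean-curvature half is an immediate appeal to Theorem~\ref{thm:cmc_rolling}, and the constant-cross-ratio half collapses to the two invariances $r_ir'_i=\mathrm{const}$ and $|A_iA_{i+1}||B_iB_{i+1}|=\mathrm{const}$, both already recorded. The one point that genuinely needs care, and where the equal-angle lemma of Figure~\ref{fig:equal_angles} enters, is verifying that, after unfolding the billiard triangles onto $\ell$, the quadrilateral $A_iA_{i+1}B_{i+1}B_i$ is indeed an isosceles trapezoid whose diagonals $A_iB_{i+1}$ and $A_{i+1}B_i$ each equal $d$; this uses the fact (already exploited in the proof of Theorem~\ref{thm:cmc_rolling}) that $P_{i+1}$ lies on both lines $A_iB_{i+1}$ and $A_{i+1}B_i$, so that each diagonal equals $|AP_{i+1}|+|BP_{i+1}|=d$. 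Granting this, Equation~\eqref{eq:d_l} reduces to elementary trapezoid trigonometry (the identity $D^2=c^2+ab$ for an isosceles trapezoid with diagonal $D$, leg $c$ and parallel sides $a,b$), and the stated cross-ratio formula is a routine computation with the four explicit vertices of the face.
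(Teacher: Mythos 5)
Your proposal is correct and follows essentially the same route as the paper, which derives the corollary from Theorem~\ref{thm:cmc_rolling} together with the invariance of $r_ir_i'$ and the identity $|A_iA_{i+1}||B_iB_{i+1}|=d^2-l^2$ of Equation~\eqref{eq:d_l}, all established in the paragraph preceding the statement. You additionally make explicit two steps the paper leaves implicit (that each diagonal of $A_iA_{i+1}B_{i+1}B_i$ equals $d$ because $P_{i+1}$ lies on it, and the isosceles-trapezoid identity $D^2=c^2+ab$), which is a faithful filling-in rather than a different argument.
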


The discrete rolling construction applied to hyperbolic billiards also 
generates discrete cmc surfaces with rotational symmetry.

\section{Concluding remarks}

We would like to mention some topics of future research. We have treated 
curvatures of faces and of edges. It would be desirable to extend the 
developed theory to define curvature also at vertices. A large area of 
research is to extend the present theory to the semidiscrete surfaces 
which have recently found attention in the geometry processing community, 
and where initial results have already been obtained.

\section*{Acknowledgments}

This research was supported by grants P19214-N18, S92-06, and S92-09 of 
the Austrian Science Foundation (FWF), and by the DFG Research Unit 
``Polyhedral Surfaces''.

\def\MR#1{\relax}


\def\http#1{{\it\spaceskip 0 pt plus 0.5pt http:/$\!$/ #1}}
  \hyphenation{Spring-born Hoff-mann Pin-kall} \hyphenation{Sau-er Blasch-ke
  Pol-thier War-detz-ky}  \hyphenation{Wall-ner Pott-mann Wun-der-lich}
  \def\Yu{Yu} \hyphenation{Dif-fe-ren-zen-geo-met-rie}
  \hyphenation{Ha-bi-li-ta-ti-ons-schrift} \hyphenation{La-guerre Bla-schitz}
\providecommand{\bysame}{\leavevmode\hbox to3em{\hrulefill}\thinspace}
\providecommand{\MR}{\relax\ifhmode\unskip\space\fi MR }
\providecommand{\MRhref}[2]{%
  \href{http://www.ams.org/mathscinet-getitem?mr=#1}{#2}
}
\providecommand{\href}[2]{#2}

\end{document}